      \theoremstyle{plain}
      \newtheorem{theorem}{Theorem}[section]
      \newtheorem{lemma}[theorem]{Lemma}
      \newtheorem{corollary}[theorem]{Corollary}
      \newtheorem{proposition}[theorem]{Proposition}
      \newtheorem{remark}[theorem]{Remark}
      \newtheorem{definition}[theorem]{Definition}
\numberwithin{equation}{section}
      \def\@setcopyright{}
      \def\serieslogo@{}
\def\A{\EuScript{A}} 
\def\B{\EuScript{B}} 
\def\D{\EuScript{D}} 
\def\E{\mathcal{V}}
\def\V{\mathcal{V}}
\def\n{\mathcal N}
\def\M{\mathcal M}
\def\H{\mathcal H}
\def\T{\mathcal{T}}
\def\R{\mathbb R}
\def\Z{\mathbb Z}
\def\N{\mathbb N}
\def\Id{\text{Id}}
\def\e{\epsilon}
\def\a{\alpha}
\def\b{\beta}
\def\la{\lambda}
\def\P{\Phi}
\def\bv{\mathbf v}
\def\tw{\tilde W}
\def\tg{\tilde g}
\def\dpm{\mbox{Diff}^{\,p}(\M)}
\def\drm{\mbox{Diff}^{\,r}(\M)}
\def\dqm{\mbox{Diff}^{\,q}(\M)}
\def\dlm{\mbox{Diff}^{\,\ell}(\M)}
\def\dpsm{\mbox{Diff}^{\,p'}(\M)}
\def\dzm{\mbox{Diff}^{\,0}(\M)}
\def\dom{\mbox{Diff}^{\,1}(\M)}
\begin{document}

%\date{\today}
\author{Victoria Sadovskaya$^{\ast}$}

\address{Department of Mathematics, The Pennsylvania State University, University Park, PA 16802, USA.}
\email{sadovskaya@psu.edu}

\title[Diffeomorphism cocycles over partially hyperbolic systems]
{Diffeomorphism cocycles over partially hyperbolic systems} 

\thanks{$^{\ast}$ Supported in part by NSF grant DMS-1764216}
\thanks{{\it Mathematical subject classification:}\, 37D30, 37C15}
\thanks{{\it Keywords:}\, Cocycle, diffeomorphism group,  partially hyperbolic system, conjugacy, isometry}

%%%%%%%%%%%%%%%%%%%%%%%%%%%%%%%

\begin{abstract} 

We consider H\"older continuous cocycles over an accessible partially hyperbolic system 
with values in the group of diffeomorphisms of a compact manifold $\M$. 
We obtain several results for this setting.
If a cocycle is bounded in $C^{1+\gamma}$, we show that it has a continuous invariant family 
of $\gamma$-H\"older Riemannian metrics on $\M$. 
We establish continuity of a measurable conjugacy between two  cocycles
assuming bunching or existence of holonomies for both and pre-compactness in $C^0$ for one of them. 
We give conditions for existence of a continuous conjugacy 
between two cocycles in terms of their cycle weights.
We also study the relation between
 the conjugacy and holonomies of the cocycles.  Our results give arbitrarily small loss of 
regularity of the conjugacy along the fiber compared to that of the holonomies and of the cocycle.
\end{abstract}

\maketitle 

%%%%%%%%%%%%%%%%%%%%%%%%%%%%
%%%%%%%%%%%%%%%%%%%%%%%%%%%%%

\section{Introduction and statement of the results}

Cohomology of group-valued cocycles over hyperbolic and, later, partially hyperbolic systems 
has been extensively studies starting with the work of Liv\v{s}ic \cite{Liv1, Liv2},
where he obtained definitive results for commutative groups and 
some results for more general groups.  The theory has 
many applications to rigidity of hyperbolic and partially hyperbolic systems and actions.
The case of non-commutative groups such as  $GL(n,\R)$ is more complicated, and groups of diffeomorphisms present further difficulties. The study of diffeomorphism-valued cocycles over hyperbolic systems 
 was started in \cite{NT95,NT96},  and continued in \cite{NT98,pKtN,LW,LW11,ASV,BK,KP,AKL,S19,DX}.

In this paper we study cohomology of diffeomorphism-valued cocycles over accessible 
partially hyperbolic systems. The central question in this area
is existence and regularity of a conjugacy, or transfer map, between two cocycles.  
One of our main theorems gives conditions for continuity of a measurable conjugacy
between two cocycles.  Such a result  is new even for hyperbolic systems in the base. 
It yields, in particular, continuity of a measurable conjugacy 
to the identity for any bunched cocycle. Another theorem shows that 
bounded cocycles are isometric,
which extends our recent results in \cite{S19} to partially hyperbolic systems.
We also give conditions for existence of a conjugacy between two arbitrary cocycles 
in terms of their cycle weights. Results of this type were established in \cite{pKtN}
for cohomology  to a constant cocycle, and used to obtain certain cocycle rigidity for higher rank  
 hyperbolic abelian group actions in \cite{pKtN,DX}. 

%Most results in are for smooth cocycles, and they use the theory of smooth partially hyperbolic  systems applied to the skew product.
In this paper we study cocycles depending H\"older continuously on the base point.
When the dependence on the base point is smooth, one can apply the theory 
of smooth partially hyperbolic  systems  to the skew product, as in \cite{NT98,pKtN,DX}.
Our approach is different, and an important role in the arguments 
is played by the {\em holonomies} of the cocycles and their relation with a conjugacy.
We also use results from \cite{ASV} on continuity of invariant sections of fiber bundles
over partially hyperbolic systems.
In our setup, it is  important to consider the regularity of the conjugacy along the fiber,
which may be lower than that of the cocycles.  
For a conjugacy to the identity cocycle, the regularity can be bootstrapped  
to that of the cocycle \cite{LW11}, but there are no such results for two general cocycles.
We obtain a conjugacy almost as regular as the holonomies of the cocycle,
which, in turn are almost as regular as the cocycle satisfying sufficient bunching.
\vskip.1cm
Now we formulate the main definitions and results.

\subsection{Basic definitions}
Let $X$ be a compact connected manifold.
 A diffeomorphism $f$ of $X$ is {\em partially hyperbolic} if
there exist a nontrivial $Df$-invariant splitting of the tangent bundle 
$TX =E^s\oplus E^c \oplus E^u$,  a Riemannian 
metric on $X$, and  positive  continuous 
functions $\la<1,\,$ $\hat\la<1,\,$ $\xi,$ $\hat\xi\,$ such that 
for any $x \in X$ and any unit vectors  
$\,\bv^s\in E^s(x)$, $\,\bv^c\in E^c(x)$, and $\,\bv^u\in E^u(x)$
\begin{equation}\label{partial def}
\|Df_x(\bv^s)\| < \la(x) <\xi(x) <\|Df_x(\bv^c)\| < \hat\xi(x)^{-1} <
\hat\la(x)^{-1} <\|Df_x(\bv^u)\|.
\end{equation}
 The sub-bundles $E^s$, $E^u$, and $E^c$ are called 
 stable, unstable, and center. $E^s$ and $E^u$  are 
tangent to the stable and unstable foliations $W^s$ and $W^u$, respectively.  
 If the center bundle is trivial, $f$ is called  {\it Anosov}.
 
 The diffeomorphism $f$ is  {\em center bunched}\,
if the functions  $\la, \hat\la, \xi, \hat\xi$  can be 
chosen so that $\la<\xi \hat \xi $ and $\hat\la<\xi \hat \xi$.

 An {\it $su$-path} in $X$ is a concatenation 
 of finitely many subpaths which lie entirely in a single 
 leaf of $W^s$ or  $W^u$. The
 diffeomorphism $f$  is called {\em accessible}  if any two points 
 in $X$ can be connected by an $su$-path.
 
We say that $f$ is {\em volume-preserving} if it has an invariant probability 
measure  $\mu$ in the measure class of a volume induced by a 
Riemannian metric. It was proved in \cite{BW}  that any essentially accessible 
center bunched $C^2$ partially hyperbolic diffeomorphism $f$ is ergodic with respect to such  $\mu$.

\begin{definition} Let $f$ be a homeomorphism of a compact metric space $X$
and let $A$ be a function from $X$ to $\dqm$. 
The {\em $\dqm$-valued cocycle over $f$ generated by }$A$ 
is the map $\A:\,X \times \Z \,\to \dqm$ defined  by $\,\A(x,0)=\Id\,$ and for $n\in \N$,
 $$
\A(x,n)=\A_x^n = A(f^{n-1} x)\circ \cdots \circ A(x) \quad\text{and}\quad\,
\A(x,-n)=\A_x^{-n}= (\A_{f^{-n} x}^n)^{-1} .
$$
\end{definition}

\noindent Clearly, $\A$ satisfies the {\em cocycle equation}\,
$\A^{n+k}_x= \A^n_{f^k x} \circ \A^k_x$.
\vskip.1cm

In this paper we consider the group $\dzm$ of homeomorphisms of $\M$ and 
the groups of diffeomorphisms $\dqm$, $q\ge1$.  We denote by $\| . \|_{C^q}$ the usual $C^q$ norm adapted to the manifold setting,   set $|g|_{C^q} = \|g \|_{C^q}+ \| g^{-1} \|_{C^q}$,  and consider a 
distance $d_{C^q}$ on  $\dqm$, see Section \ref{Crdist}. 

\vskip.1cm
We say that a $\dqm$-valued cocycle $\A$ is {\em $\beta$-H\"older}, $0<\beta \le 1$, if 
 there exists a constant $c>0$ such that
\begin{equation}\label{Holder A}
 d_{C^q} (\A_x, \A_y) \le  \, c\, d_X (x,y)^\beta \quad\text{for all  }x,y \in X. 
\end{equation}

%%%%%%%%%%%%%%%%%%%%%%%%%%%%%%%%%%%%
 
\subsection{H\"older continuous cocycles with bounded set of values are isometric.}
The following theorem gives a partially hyperbolic version of Theorem 1.3 in \cite{S19},
where we considered Diff$^2(\M)$-valued cocycles with bounded periodic data
over hyperbolic systems.

\begin{theorem} \label{isometric} 

Let $f:X\to X$ be an accessible  center bunched $C^2$ partially hyperbolic diffeomorphism preserving a volume $\mu$.

Let  $0<\gamma\le 1$, and let  $\A$  be $\beta$-H\"older continuous 
Diff$^{\,1+\gamma}(\M)$-valued cocycle over $(X,f)$ such that the set 
$\{\,|\A_x^n|_{C^{1+\gamma}}:\, x\in X, \; n\in \Z\,\}$ is bounded.
Then there exists a family of Riemannian metrics $\{\tau_x: \, x\in X\}$ on $\M$
such that 
\begin{itemize}
\item[{\bf (a)}] $\A_x : (\M,\tau_x)\to (\M,\tau_{fx})$ is an isometry for each $x\in X,$ 
\vskip.1cm
\item[{\bf (b)}] Each $\tau_x$ is $\gamma$-H\"older continuous on $\M$, and
\vskip.1cm
\item[{\bf (c)}] $\tau_x$ depends continuously on $x$ in $C^{\a}$ distance  for each $0<\alpha< \gamma$.
\vskip.1cm
\item[{\bf (d)}]  For each $0<\alpha< \gamma$,\, $\tau_x$ depends H\"older continuously on $x$ along the leaves of $W^s$ and $W^u$ 
in $C^{\a}$ distance with exponent $\beta(\gamma-\alpha)$.
\end{itemize}

\noindent If $(X,f)$ is a hyperbolic system  and a cocycle $\A$ is as above, then, additionally,
  for each $0<\alpha< \gamma$ the metric $\tau_x$ depends {\em H\"older} continuously on $x\in X$ in $C^{\a}$ distance with exponent $\beta(\gamma-\alpha)$.
\end{theorem}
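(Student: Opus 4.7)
The strategy is to construct an $\A$-invariant family of $\gamma$-H\"older metrics $\mu$-almost everywhere by a fixed-point argument, and then upgrade it to a continuous family on all of $X$ using accessibility of $f$ together with the continuity theorem from \cite{ASV}.

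First, I will use the uniform bound $M=\sup\{|\A_x^n|_{C^{1+\gamma}}:x\in X,\,n\in\Z\}$. This gives $\|D\A_x^n\|\cdot\|D\A_x^{-n}\|\le M^2$, so $\A$ is bunched with arbitrarily good exponents, and in particular its stable and unstable holonomies $H^s_{x,y},H^u_{x,y}$ exist, are uniformly bounded in $C^{1+\alpha}$ for each $\alpha<\gamma$, and depend $\beta(\gamma-\alpha)$-H\"older on the base point in $C^{1+\alpha}$-distance (obtained by tracking the $C^{1+\gamma}$ bound through the standard telescoping construction of the holonomies). Fix a smooth reference metric $\sigma$ on $\M$; Arzel\`a--Ascoli shows that for each $x$ the orbit $\{(\A_x^n)^*\sigma:n\in\Z\}$ is pre-compact in $C^\alpha$ for every $\alpha<\gamma$, and I let $K_x$ denote its closed convex hull inside the positive cone of $C^\alpha$ symmetric $(0,2)$-tensors. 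Since $(\A_x)^*(\A_{fx}^n)^*\sigma=(\A_x^{n+1})^*\sigma\in K_x$, pullback $(\A_x)^*:K_{fx}\to K_x$ is continuous and affine. Markov--Kakutani applied to the compact convex space of measurable sections $x\mapsto\tau_x\in K_x$ (or, equivalently, a subsequential limit of the Ces\`aro averages $\frac{1}{2N+1}\sum_{n=-N}^{N}(\A_x^n)^*\sigma$, whose $\A$-defect is $O(1/N)$) produces a $\mu$-measurable $\A$-invariant section $x\mapsto\tau_x$. Each $\tau_x$ inherits a uniform $C^\gamma$-bound and positive-definiteness, yielding (b) on a full-measure set.

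The main step is then to upgrade $\tau$ to a continuous section. For $y\in W^s(x)$, the telescoping formula $H^s_{x,y}=\lim_n(\A_y^n)^{-1}\circ\A_x^n$ combined with $\A$-invariance of $\tau$ along orbits gives formally $(H^s_{x,y})^*\tau_y=\lim_n(\A_x^n)^*\tau_{f^n y}$, which would equal $\tau_x$ by the exponential contraction along $W^s$ and the uniform $C^{1+\alpha}$ bound on $\A_x^n$ if $\tau$ were continuous; turning this into a rigorous $\mu$-a.e.\ holonomy invariance of the measurable section is exactly the invariance principle from \cite{ASV}. The continuity theorem of \cite{ASV}, which exploits accessibility and center bunching, then yields a continuous version of $\tau$ on all of $X$ in $C^\alpha$-distance. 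By density and continuity of the isometry equation, (a) extends to every $x\in X$, and (c) is the \cite{ASV} conclusion. For (d), holonomy invariance $\tau_y=(H^{s,u}_{x,y})^*\tau_x$ on $W^{s,u}(x)$ transports the $\beta(\gamma-\alpha)$-H\"older dependence of $H^{s,u}_{x,y}$ on the base to $\tau$. When $(X,f)$ is hyperbolic there is no center direction, so any two nearby points are joined by a local $su$-path of total length $\asymp d_X(x,y)$ whose two legs contribute H\"older estimates by (d), giving the final assertion.

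The principal obstacle is the upgrade step: passing from a $\mu$-a.e.\ $\A$-invariant measurable section to a holonomy-invariant, and then continuous, section requires the full force of accessibility, center bunching, and the \cite{ASV} machinery, which is precisely where the partially hyperbolic setting departs from the hyperbolic one treated in \cite{S19}.
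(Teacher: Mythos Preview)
Your proposal is essentially correct and follows the same overall architecture as the paper: build a measurable $\A$-invariant family of metrics, establish that it is essentially holonomy-invariant, and then invoke Theorem~D of \cite{ASV} to upgrade it to a continuous bi-invariant section, from which (a)--(d) and the hyperbolic addendum follow as you indicate.

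A few points where you diverge from, or are looser than, the paper. (i) For the existence of the invariant metric you use Ces\`aro averages / Markov--Kakutani on tensors; the paper (via \cite{S19}) instead works fiberwise in the symmetric space $GL(m,\R)/SO(m,\R)$ of inner products and uses its non-positive curvature. Your averaging argument is a legitimate alternative, though you should say in which topology the space of measurable sections is compact (e.g.\ weak-$*$ in $L^\infty$ with values in a fixed compact convex set). (ii) The step you label ``exactly the invariance principle from \cite{ASV}'' is not that principle: the \cite{ASV} invariance principle concerns holonomy invariance of disintegrations of measures under vanishing fiber exponents, not of sections. The actual argument, as in \cite{S19} Proposition~4.4, is a Lusin-plus-ergodicity argument: on a Lusin set where $\tau$ is continuous, the formal computation you wrote becomes rigorous along the subsequence of simultaneous returns. (iii) Theorem~D of \cite{ASV} is applied with fiber $\T^0(\M)$ (which is separable, hence refinable) and yields continuity of $\tau$ only in $C^0$; the $C^\alpha$-continuity in (c) is then deduced, as in the paper, from the uniform $C^\gamma$ bound on $\{\tau_x\}$ together with compactness of the embedding $C^\gamma\hookrightarrow C^\alpha$.
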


In the theorem above, by a hyperbolic system we mean either a transitive Anosov diffeomorphism, 
or a mixing diffeomorphism of a locally maximal hyperbolic set, or 
a mixing subshift of finite type, see  \cite[Section 2.1]{S19} or \cite{KH} for definitions.
% or more generally a hyperbolic homeomorphism. 

%%%%%%%%%%%%%%%%%%%%%%%%%%%%%%%

\subsection{Existence and regularity of holonomies}
An important role in the study of non-commutative cocycles, 
and in particular diffeomorphism-valued cocycles is played by their holonomies.
For example, in Theorem \ref{isometric}, essential invariance of $\tau$ under holonomis 
helps to obtain its regularity.

\begin{definition} \label{def hol}
Let $(X,f)$ be a hyperbolic or partially hyperbolic system, 
and let $\A$ be a $\dqm$-valued cocycle over $(X,f)$.
 We say that $\A$ has {\em stable holonomies} $H^{\A,s}_{x,y}$ in $\drm$ if  
  $$
  H^{\A,s}_{x,y}=\lim_{n\to+\infty} (\A^n_y)^{-1} \circ \A^n_x \;\text{ exists in $\drm$
  for every $x\in X$ and $y\in W^s(x)$},
$$
 and the map $(x,y)\mapsto H^{\A,s}_{x,y}$  into $\drm$
is continuous on the set of pairs $(x,y)$ where $x\in X$ and $y\in W_{loc}^s(x)$.
If $r=0$, we also require that the homeomorphisms $H^{\A,s}_{x,y}$ 
  are H\"older continuous with uniform exponent and constant  for all  such pairs $(x,y)$.

The {\em unstable holonomies} $H^{\A,u}_{x,y}$ are similarly defined as
$$
H^{\A,u}_{x,y}=\lim_{n\to-\infty} (\A^n_y)^{-1} \circ \A^n_x,\quad \text{where }y\in W^u(x).
$$
We say that $\A$ has holonomies if it has both stable and unstable ones.
\end{definition}

Clearly,  $H^{\A,\,s/u}_{x,x}=\Id$ for every $x\in X$.
We say that the stable holonomies of $\A$ are {\em $\beta'$-H\"older  along
 the stable leaves} if for some $c_1>0$,
\begin{equation} \label{beta'}
 d_{C^r}(H^{\A,\, s}_{\,x,y},\Id) \leq c_1\,d_X (x,y)^{\beta'} 
 \text{ for all  $x\in X$   and $y\in W^{s}_{loc}(x)$,}
\end{equation} 
H\"older continuity of  $H^{\A,u}$ along the unstable leaves is defined similarly.

\vskip.2cm 
Existence of holonomies for cocycles has been extensively studied. We summarize the results for homeomorphism and diffeomorphism valued cocycles. They show that H\"older continuity and 
certain bunching, or domination, assumptions on the cocycle 
imply existence of its holonomies and their H\"older continuity along the stable and unstable leaves.
We formulate results for stable holonomies. The statements for unstable holonomies are similar,
with $\hat \la$ in place of $\la$.
\vskip.1cm

Let $(X,f)$ be a hyperbolic or partially  hyperbolic system, and let
\begin{equation}\label{lambda}
\la=\max\,\{ \la(x):\,x\in X \},  \,\text{ where $\la(x)$ is as in \eqref{partial def}.}
\end{equation}
Let $\A$ be a $\beta$-H\"older continuous $\dqm$-valued cocycle, $q\ge 1$, and let 
\begin{equation}\label{sigma}
\sigma = \max_{x\in X} \, \max \{\|D\A_x\|, \, \|D\A_x^{-1}\|\},
\;\text{ where }\, \|D\A_x\|=\max_{t\in \M}\|D_t\A_x\|.
\end{equation}

\begin{itemize}

\item[{\bf (E1)}]  \cite[Proposition 3.3]{pKtN},\,  \cite[Proposition 3.10]{ASV},\, [Proposition \ref{C0 hol}].\,\\
If  $q=1$ and $\sigma \la^\beta <1$, 
then $\A$ has stable holonomies in $\dzm$,
and they are $\beta$-H\"older along the leaves of $W^s$.
Instead of $\sigma$ as in \eqref{sigma}, we can take 
$\sigma$ such that for some constant $K$ 
$$
\| D \A^n_x\| \le K \sigma^{|n|}
\;\text{ for every $x\in X$ and $n\in \Z$}.
$$
\vskip.1cm

\item[{\bf (E2)}] \cite[Proposition 3.1]{BK}\,
If  $2 \le q\in \N$  such that $\sigma^{2q-1} \la^\beta <1$, 
then $\A$ has stable  holonomies in Diff$^{\,q-1}(\M)$, 
and they are $\beta$-H\"older along the leaves of $W^s$.
\vskip.2cm

\item[{\bf (E3)}] [Propositions \ref{holonomies}] \,
If $k\le r<q<k+1$, where $k\in \N$,  and there exist $\eta$ 
and $K$ such that $\,\eta^{2(r+ 1)/(q-r)} \cdot \la^{\beta}<1\,$ and 
 $\, |\A_x^n|_{C^q} \le K\eta^n\,$ for all $x\in X$  and $n\in \N$,
then $\A$ has stable holomomies in $\drm$, 
and they are $\beta (q-r)$\,-\,H\"older along the leaves of $W^s$. 
\vskip.2cm

\item[{\bf (E3$'$)}] [Remark \ref{from bunching}] \, If $k\le r<q<k+1$, where $k\in \N$, 
$\A$ is a Diff$^{\,k+1}(\M)$-valued cocycle with  bounded $|\A_x|_{C^{k+1}}$, and $\,\sigma^{2(r+1)(k+1)/ (q-r)} \cdot \lambda^{\beta}<1$,
then $\A$ has stable holomomies in $\drm$, 
and they are $\beta  (q-r)$\,-\,H\"older continuous along the leaves of $W^s$. 
\end{itemize}

%%%%%%%%%%%%%%

\subsection{Continuity  of a measurable conjugacy between two cocycles}

We consider a {\em conjugacy}, or {\em transfer map}, between two cocycles.
If it exists, the cocycles are called {\em cohomologous}.

\begin{definition} \label{cohdef} Let $\A$ and $\B$ be $\dqm$-valued cocycles over $(X,f)$.
A  {\em conjugacy} between $\A$ and $\B$  is a 
 function  $\P:X\to \drm$ such that
\begin{equation}\label{conj eq}
  \A_x^n=\P_{f^n x} \circ \B_x^n \circ  \P_x^{-1} 
  \quad\text{ for all }n\in \Z \text{ and }x\in X,
\end{equation}
equivalently, $\A_x=\P_{fx} \circ \B_x \circ \P_x^{-1}\,\text{ for all }x\in X.$ 
\end{definition}

A conjugacy can be considered in various regularities, for example continuous, H\"older continuous, or measurable.
In the latter case we understand that $\P$ is defined and satisfies \eqref{conj eq} 
on a set of full measure.
 \vskip.1cm
 
 A key step in proving regularity of a measurable conjugacy is showing that it 
 intertwines the holonomies of the cocycles. 
  
\begin{definition} \label{def int}
Let $\A$ and $\B$ be cocycles with holonomies, and let $\P$ be a conjugacy between them.
We say that   $\P$ {\em intertwines the holonomies} of $\A$ and $\B$ on a set $Y\subseteq X$ if
\begin{equation}\label{intertwines def}
H_{x,y}^{\A,s/u}=\P_y\circ H_{x,y}^{\B,s/u}\circ \P_x^{-1}\quad \text{for all }x,y \in Y
\text{ such that }y\in W^{s/u}(x).
\end{equation}
\end{definition}

In the following theorem we establish continuity of a measurable conjugacy between cocycles 
over partially hyperbolic diffeomorphisms. If the holonomies of the cocycles are H\"older continuous
 along the leaves of $W^s$ and $W^u$, as we have in (E1-E3$'$), then the conjugacy is also 
 H\"older continuous along the leaves.
We note that without suitable assumptions a measurable conjugacy may not be continuous
 even if $f$ is an Anosov diffeomorphism and the cocycles are linear,  close to identity,
 and one of them is constant \cite[Section 9]{PW}.

\begin{theorem}\label{partially hyperbolic}
Let $f:X\to X$ be an accessible  center bunched $C^2$ partially hyperbolic diffeomorphism 
preserving a volume $\mu$.
Let  $\A$ and $\B$ be $\dqm$-valued  cocycles over $(X,f)$.
Suppose that the set $\{\B^n_x:\; x\in X, \; n\in \Z\}$
has compact closure in $\dzm$ and  that 
$\A$ and $\B$ have  holonomies in $\drm$, where either $r=0$ or $1\le r\le q$.
\vskip.15cm

\noindent {\bf (a)} Let $\P:X\to \drm$ be a $\mu$-measurable conjugacy between $\A$ and $\B$.
Then $\P$ coincides on a set of full measure with a bounded conjugacy $\tilde \P:X\to\drm $
which intertwines the holonomies of $\A$ and $\B$.
The function $\tilde \P:X\to \dpm$ is continuous for $p=r$ if $r$ is an integer,
and any $p<r$ otherwise.
\vskip.15cm

\noindent {\bf (b)}  Suppose that $r>1$ and the stable and unstable holonomies of $\A$ and $\B$ are $\beta'$-H\"older 
along the stable and unstable leaves respectively in the sense of \eqref{beta'}.
Then the conjugacy $\tilde \P:X\to \dpsm$ is H\"older continuous along
 the stable and unstable leaves with the exponent $\beta'(r-p')$ for any $p'$ such that 
  $r-1\le p'<r$  if  $r$ is an integer, and $\lfloor r\rfloor \le p'<r$ otherwise.
\end{theorem}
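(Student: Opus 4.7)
The plan is to view the measurable conjugacy $\P$ as an invariant section of an associated fiber bundle and to appeal to a continuity theorem for such sections over accessible partially hyperbolic systems, in the spirit of \cite{ASV}. Concretely, introduce the trivial bundle $E=X\times\drm$ equipped with the skew-product $F(x,\phi)=(fx,\,\A_x\circ\phi\circ\B_x^{-1})$; the conjugacy equation \eqref{conj eq} says exactly that $x\mapsto\P_x$ is a $\mu$-measurable $F$-invariant section. Since $\A$ and $\B$ have holonomies in $\drm$, the bundle $E$ inherits natural stable and unstable holonomies
\[
  \hat H^{s/u}_{x,y}(\phi)=H^{\A,s/u}_{x,y}\circ\phi\circ(H^{\B,s/u}_{x,y})^{-1}.
\]

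For Part (a), I would use the compactness of $\{\B^n_x\}$ in $\dzm$ to verify the bunching/equicontinuity assumption on the fiber action of $F$: iterating $F$ along an orbit keeps the $\B$-factor in a compact subset of $\dzm$, so the fiber action is essentially equicontinuous. Together with accessibility of $(X,f)$ and ergodicity of $\mu$ (from \cite{BW}), the invariant-section theorem then produces a bounded continuous section $\tilde\P$ that agrees with $\P$ $\mu$-a.e., intertwines the bundle holonomies — which by construction is the intertwining in Definition \ref{def int} — and is continuous into $\dpm$ for every $p<r$. When $r$ is an integer, $\drm$ is a topological group and continuity of composition allows upgrading to $p=r$ by a standard bootstrap.

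For Part (b), I would exploit the intertwining from (a): for $y\in W^s_{loc}(x)$,
\[
  \tilde\P_y=H^{\A,s}_{x,y}\circ\tilde\P_x\circ(H^{\B,s}_{x,y})^{-1},
\]
and analogously along unstable leaves. Using \eqref{beta'}, the uniform $C^r$-bound on $\tilde\P$, and standard composition estimates in H\"older/diffeomorphism spaces — where composing with $\tilde\P_x\in C^r$ consumes one integer derivative when $r$ is an integer, and the fractional part $\{r\}$ otherwise — one obtains a preliminary bound
\[
  d_{C^{p_0}}(\tilde\P_y,\tilde\P_x)\le c_1\, d_X(x,y)^{\beta'(r-p_0)},
\]
with $p_0=r-1$ in the integer case and $p_0=\lfloor r\rfloor$ otherwise. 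Interpolating this with the uniform bound $d_{C^r}(\tilde\P_y,\tilde\P_x)\le C$ via convexity of H\"older norms yields
\[
  d_{C^{p'}}(\tilde\P_y,\tilde\P_x)\le c_2\, d_X(x,y)^{\beta'(r-p')}\qquad\text{for } p_0\le p'<r,
\]
which is the asserted H\"older regularity along leaves.

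The main obstacle is Part (a): adapting the continuity theorem for invariant sections of \cite{ASV} to a bundle whose fibers are diffeomorphism groups — infinite-dimensional and non-compact. The role of the compactness hypothesis on $\B$ is to restrict the fiber action to an essentially compact equicontinuous subset of $\dzm$, where an ASV-style argument — iterated holonomy propagation along $su$-paths combined with Lusin and ergodicity to upgrade from $\mu$-a.e.\ to everywhere — becomes available. A secondary technical point in Part (b) is the precise form of the composition estimate producing the factor $(r-p_0)$ in the preliminary exponent, which requires somewhat different bookkeeping in the integer and non-integer cases but follows from standard Fa\`a di Bruno / H\"older composition lemmas.
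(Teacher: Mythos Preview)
Your framework for Part (a) hides the main step. The ASV invariant-section theorems (Theorem~D in particular) take as \emph{input} a section that is already bi-essentially invariant under the bundle holonomies $\hat H^{s/u}$; they do not manufacture holonomy-invariance from $F$-invariance. Being a conjugacy, i.e.\ $F$-invariant, is a priori a weaker condition than intertwining the holonomies, and bridging this gap is the heart of the proof. The paper does this in a separate proposition: using Lusin plus Birkhoff one finds a full-measure set $Y$ and, for $x\in Y$, $y\in W^s(x)\cap Y$, a subsequence $n_i$ with $d_{C^0}(\P_{x_{n_i}},\P_{y_{n_i}})\to 0$; then the conjugacy relation is rewritten as
\[
  \P_y^{-1}\circ(\A^{n_i}_y)^{-1}\circ\A^{n_i}_x\circ\P_x
  \;=\;(\B^{n_i}_y)^{-1}\circ\P_{y_{n_i}}^{-1}\circ\P_{x_{n_i}}\circ\B^{n_i}_x,
\]
and the two sides are shown to converge to $\P_y^{-1}\circ H^{\A,s}_{x,y}\circ\P_x$ and $H^{\B,s}_{x,y}$ respectively. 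The compactness of $\{\B^n_x\}$ enters only on the right-hand side, to control $(\B^{n_i}_y)^{-1}\circ(\cdot)$ via uniform equicontinuity; the left-hand side uses only the convergence $(\A^n_y)^{-1}\circ\A^n_x\to H^{\A,s}_{x,y}$ and the fixed outer maps $\P_x,\P_y$. Your claim that ``the fiber action of $F$ is essentially equicontinuous'' is therefore not correct: the $\A$-factor is not controlled, and no bunching-type hypothesis on the full fiber map is available or needed. Only after essential intertwining is established does ASV Theorem~D apply (with fiber $\dzm$, not $\drm$) to upgrade to an everywhere-defined continuous intertwining section; the higher regularity and boundedness then follow from the intertwining relation itself via Proposition~\ref{prop of inter conj}(e).

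For Part (b) your outline is essentially correct and matches the paper's approach: once $\tilde\P$ intertwines the holonomies, one estimates $d_{C^{p'}}(\tilde\P_x,\tilde\P_y)$ directly from $\tilde\P_y=H^{\A,s}_{x,y}\circ\tilde\P_x\circ H^{\B,s}_{y,x}$ using the $C^r$ composition estimate (Lemma~\ref{distance}) with $q=r$, $r=p'$, $\rho=r-p'$. The paper obtains the exponent $\beta'(r-p')$ for every admissible $p'$ in a single application of that lemma, so your interpolation step is unnecessary, though not wrong.
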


 By the results on existence of holonomies, 
instead of assuming existence and H\"older regularity 
of the holonomies we can assume that $\A$ and $\B$ are H\"older continuous 
cocycles with suitable bunching as in (E1-E3$'$).

\begin{remark} In the case of  $\B\equiv \Id$, all assumptions  on $\B$ are  satisfied,
and we obtain continuity of a measurable conjugacy to the identity cocycle.
Results of this type are often referred to as {\em measurable  Liv\v{s}ic theorems.}
\end{remark}

To the best of our knowledge,  it is the first result of this type for diffeomorphism-valued  cocycles 
even over a hyperbolic system and with $\B\equiv \Id$.

Theorem \ref{partially hyperbolic} applies to a volume-preserving Anosov diffeomorphism
since it is accessible by the local product structure of the stable and unstable manifolds,
and trivially center bunched. In this case we also obtain  H\"older continuity of $\Phi$
 on $X$.

\begin{corollary} \label{hyperbolic}
If $f$ in Theorem \ref{partially hyperbolic} is an Anosov diffeomorphism, $r>1$, 
and the stable and unstable holonomies of $\A$ and $\B$ are $\beta'$-H\"older 
along the stable and unstable leaves respectively, 
then $\tilde \P:X\to \dpsm$ is H\"older continuous on $X$ with the exponent $\beta'(r-p')$
for any $p'$ as above.

\end{corollary}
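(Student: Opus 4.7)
My plan is to deduce Corollary \ref{hyperbolic} from Theorem \ref{partially hyperbolic}(b) using the local product structure of an Anosov diffeomorphism together with a triangle-inequality argument in the $C^{p'}$ distance.

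First I would recall that for an Anosov diffeomorphism $f:X\to X$, the stable and unstable foliations have a uniform local product structure: there exist $\delta>0$ and a constant $C_0>0$ such that for any $x,y\in X$ with $d_X(x,y)<\delta$, the intersection $W^s_{loc}(x)\cap W^u_{loc}(y)$ consists of a single point $z=z(x,y)$, and moreover
\begin{equation*}
d_X(x,z)\le C_0\,d_X(x,y),\qquad d_X(z,y)\le C_0\,d_X(x,y).
\end{equation*}
In particular, $z\in W^s(x)$ and $y\in W^u(z)$, and both leaf-distances are comparable to $d_X(x,y)$.

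Next I would apply Theorem \ref{partially hyperbolic}(b) twice: once along the stable leaf from $x$ to $z$, and once along the unstable leaf from $z$ to $y$. This gives, for the bounded conjugacy $\tilde\Phi:X\to\text{Diff}^{p'}(\M)$ and some constant $c_2>0$,
\begin{equation*}
d_{C^{p'}}(\tilde\Phi_x,\tilde\Phi_z)\le c_2\,d_X(x,z)^{\beta'(r-p')},\qquad d_{C^{p'}}(\tilde\Phi_z,\tilde\Phi_y)\le c_2\,d_X(z,y)^{\beta'(r-p')}.
\end{equation*}
Combining these through the triangle inequality for $d_{C^{p'}}$ and using the product-structure estimate yields
\begin{equation*}
d_{C^{p'}}(\tilde\Phi_x,\tilde\Phi_y)\le 2c_2 C_0^{\beta'(r-p')}\,d_X(x,y)^{\beta'(r-p')}
\end{equation*}
for every pair $x,y$ with $d_X(x,y)<\delta$.

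Finally, since $X$ is compact and $\tilde\Phi$ is bounded in $\text{Diff}^{p'}(\M)$, extending the above local Hölder estimate to a global one on $X$ is routine: for pairs with $d_X(x,y)\ge\delta$ one uses boundedness of $\tilde\Phi$ to absorb the diameter into the Hölder constant. Thus $\tilde\Phi:X\to\text{Diff}^{p'}(\M)$ is Hölder continuous on $X$ with exponent $\beta'(r-p')$. I do not anticipate a genuine obstacle here; the only subtle point is making sure the triangle inequality is used in the $C^{p'}$ metric $d_{C^{p'}}$ introduced in Section \ref{Crdist}, where it is available by construction, so that the leaf-wise Hölder estimates of Theorem \ref{partially hyperbolic}(b) combine cleanly.
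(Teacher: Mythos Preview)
Your proposal is correct and follows essentially the same approach as the paper: use the local product structure of the Anosov system to find an intermediate point on $W^s_{loc}(x)\cap W^u_{loc}(y)$, apply the leaf-wise H\"older estimate from Theorem~\ref{partially hyperbolic}(b) on each leg, and combine via the triangle inequality for $d_{C^{p'}}$. The paper's proof is slightly terser (it does not spell out the extension from local to global), but the argument is the same.
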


\vskip.2cm 

%%%%%%%%%%%%%%%%%%%%%%%%%%%%%%%

\subsection{Existence and properties of a conjugacy intertwining holonomies}$\;$\\
We begin with a result that a H\"older continuous conjugacy between 
sufficiently bunched cocycles  intertwines their holonomies. 

\begin{proposition} \label{C0 int}
Let $(X,f)$ be a hyperbolic or partially hyperbolic system, and let  $\la$ be as in \eqref{lambda}.
Let $\A$ and $\B$ be $\dom$-valued cocycles over $(X,f)$
so that  $\A_x, \B_x:X\to \dzm$ are $\beta$-H\"older continuous. 
If there exist constants $K$ and $\,\sigma$ such that 
$$
\sigma \la^\beta <1  \quad\text{and}\quad
\| D \A^n_x\| \le K \sigma^{|n|} , \;\;  \| D \B^n_x\| \le K \sigma^{|n|} 
\;\text{ for every $x\in X$ and $n\in \Z$},
$$
then $\A$ and $\B$ have stable holonomies in $\dzm$, and  any $\b$-H\"older continuous conjugacy  
$\P:X\to \dzm$ between $\A$ and $\B$ intertwines the stable holonomies.
\end{proposition}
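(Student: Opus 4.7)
The plan is to deduce both the existence of the stable holonomies and the intertwining identity from a single limit computation. The hypothesis $\sigma\la^\b<1$ together with the derivative bound $\|D\A^n_x\|\le K\sigma^{|n|}$ (and the analogue for $\B$) places us exactly in the setting of \textbf{(E1)}, so $\A$ and $\B$ have stable holonomies in $\dzm$, realized as uniform limits. Fix $x\in X$ and $y\in W^s_{loc}(x)$. Iterating $\A_z=\P_{fz}\circ\B_z\circ\P_z^{-1}$ and inverting gives $(\A^n_y)^{-1}=\P_y\circ(\B^n_y)^{-1}\circ\P_{f^n y}^{-1}$, hence
$$(\A^n_y)^{-1}\circ\A^n_x\,=\,\P_y\circ\bigl[(\B^n_y)^{-1}\circ\P_{f^n y}^{-1}\circ\P_{f^n x}\circ\B^n_x\bigr]\circ\P_x^{-1}.$$
Since $\P_y,\P_x$ are fixed homeomorphisms of the compact manifold $\M$, the target identity $H^{\A,s}_{x,y}=\P_y\circ H^{\B,s}_{x,y}\circ\P_x^{-1}$ will follow once the bracketed middle factor is shown to converge to $H^{\B,s}_{x,y}$ in $C^0$.

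The key algebraic step is to rewrite $(\B^n_y)^{-1}\circ\P_{f^n y}^{-1}=(\P_{f^n y}\circ\B^n_y)^{-1}=(\A^n_y\circ\P_y)^{-1}=\P_y^{-1}\circ(\A^n_y)^{-1}$, which trades a derivative estimate about $\B^n_y$ for one about $\A^n_y$ while preserving the ability to compare with $(\B^n_y)^{-1}\circ\B^n_x$. Evaluating the bracket at $v\in\M$ and setting $u=\B^n_x(v)$, its deviation from $(\B^n_y)^{-1}(u)$ equals
$$\P_y^{-1}\bigl((\A^n_y)^{-1}(\P_{f^n x}(u))\bigr)-\P_y^{-1}\bigl((\A^n_y)^{-1}(\P_{f^n y}(u))\bigr).$$
I would then chain three uniform estimates: the stable contraction $d_X(f^n x,f^n y)\le C\la^n d_X(x,y)$, the $\b$-H\"older bound $\sup_u d_\M(\P_{f^n x}(u),\P_{f^n y}(u))\le c\,d_X(f^n x,f^n y)^\b$, and the derivative bound $\|D(\A^n_y)^{-1}\|\le K\sigma^n$, to obtain
$$\sup_{u\in\M}\bigl\|(\A^n_y)^{-1}(\P_{f^n x}(u))-(\A^n_y)^{-1}(\P_{f^n y}(u))\bigr\|\,\le\,Kc\,C^\b\,(\sigma\la^\b)^n\,d_X(x,y)^\b,$$
which tends to $0$ since $\sigma\la^\b<1$.

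The delicate point, and what I expect to be the main obstacle, is that $\P_y^{-1}$ lies only in $\dzm$, so no quantitative modulus of continuity is provided by the H\"older regularity of $\P$. I would bypass this by observing that $\P_y^{-1}$ is a \emph{single fixed} homeomorphism of the compact manifold $\M$ and therefore uniformly continuous; applying its modulus of continuity to the displayed bound forces the difference above to zero uniformly in $v\in\M$. Hence the bracketed factor converges to $\lim_n (\B^n_y)^{-1}\circ\B^n_x = H^{\B,s}_{x,y}$ in $C^0$, and sandwiching with the fixed homeomorphisms $\P_y$ and $\P_x^{-1}$ yields $(\A^n_y)^{-1}\circ\A^n_x\to\P_y\circ H^{\B,s}_{x,y}\circ\P_x^{-1}$ in $C^0$. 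Combined with the $\dzm$-convergence of the same sequence to $H^{\A,s}_{x,y}$ coming from \textbf{(E1)}, uniqueness of $C^0$-limits delivers the intertwining identity.
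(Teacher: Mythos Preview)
Your proof is correct and follows essentially the same approach as the paper's: both use the conjugacy identity to relate $(\A^n_y)^{-1}\circ\A^n_x$ to $(\B^n_y)^{-1}\circ\P_{f^n y}^{-1}\circ\P_{f^n x}\circ\B^n_x$, and then combine the $\beta$-H\"older bound on $\P$, the $K\sigma^{n}$ derivative growth, and the uniform continuity of the fixed homeomorphism $\P_y^{-1}$ to pass to the limit on each side. Your algebraic rewrite trading the $(\B^n_y)^{-1}$-Lipschitz bound for the $(\A^n_y)^{-1}$-bound is a harmless but unnecessary detour --- the paper simply applies the derivative estimate for $\B$ directly.
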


We note that H\"older continuity of the conjugacy with exponent less than $\beta$
does not guarantee the intertwining, even for linear cocycles over hyperbolic systems, 
see \cite[Proposition 4.4]{KS16} based on examples in \cite{dlL,NT98}.

\vskip.1cm 

Since  intertwining of the  holonomies is a ``pointwise" property,  it suffices to
obtain it in the lowest regularity. Once the intertwining is established, further properties 
of the conjugacy can be obtained, as stated in Proposition \ref{prop of inter conj} below.
Also, if $f$ is an Anosov diffeomorphism and the cocycles are smooth along the base $X$,
the main result of \cite{NT98} can be used to obtain smoothness of an
intertwining conjugacy.

\vskip.2cm

Let $f:X\to X$ be an accessible partially hyperbolic diffeomorphism,  let 
$\A$ be a $\dqm$-valued cocycle over  $(X,f)$ with the stable and unstable holonomies
 $H^{\A,s}$ and $H^{\A,u}$.
An $su$-cycle in $X$ is a  closed $su$-path, which we view as a sequence of points
$$
P=P_{x_0}=\{x_0, x_1, \dots , x_{k-1}, x_k=x_0\}, \;\text{ where }\,x_{i+1}\in W^{s/u}(x_i),\; i=0, \dots, {k-1}.
$$
We define the {\em cycle weight} of  $P$  as
\begin{equation}\label{weight}
\H^{\A, P}_{x_0} =  H_{x_{k-1},x_k} \circ \dots \circ H_{x_1, x_2} \circ H_{x_0, x_1}, 
\end{equation}
where $H_{x_i, x_{i+1}}=H^{\A,s/u}_{x_i, x_{i+1}}$ if $x_{i+1} \in W^{s/u} (x_i)$.
One can similarly consider the weight $\H^{\A, P}_{x_0,x_k}$ for an  $su$-path 
$P=P_{x_0, x_k}$ from $x_0$ to $x_k$.

\begin{proposition} \label{prop of inter conj}
Let $f:X\to X$ be an accessible $C^1$ partially hyperbolic diffeomorphism.
Let $\A$ and $\B$ be $\dzm$-valued cocycles over $(X,f)$ with the stable and 
unstable holonomies  $H^{\A,\, s/u}$ and $H^{\B,\, s/u}$ in $\dzm$.
Let $\P:X\to \dzm$ be any conjugacy between  $\A$ and $\B$
which intertwines their holonomies. Then

\begin{itemize}
\item[{\bf (a)}]  $\P$ conjugates cycle weights, i.e., 
$\H^{\A,P_x}_x=\P_x\circ \H^{\B,P_x}_x \circ \P_x^{-1}$
for any $su$-cycle $P_x$;
\vskip.1cm

\item[{\bf (b)}] More generally,   
$\,\H^{\A,P}_{x,y}=\P_y\circ \H^{\B,P}_{x,y}\circ \P_x^{-1}$
 for any  $su$-path $P=P_{x,y}$;
\vskip.1cm

\item[{\bf (c)}]  $\P$ is uniquely determined by its value at one point;
\vskip.1cm

\item[{\bf (d)}]  $\P:X\to \dzm$ is continuous;
\vskip.1cm

\item[{\bf (e)}] If for some $r\ge 1$,\, $H^{\A,\, s/u}$ and $H^{\B,\, s/u}$ are in  $\drm$ 
and $\P_{x_0}\in \drm$ for some $x_0\in X$, then $\P$ is a bounded function from $X$ to $\drm$,
and $\P:X\to \dpm$ is continuous for $p=r$ if $r$ is an integer,
and for any $p<r$ otherwise.

 \end{itemize}
 
\end{proposition}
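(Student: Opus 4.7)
Parts (a) and (b) are algebraic and best handled together. For an $su$-path $P=\{x_0,\dots,x_k\}$ with $x_{i+1}\in W^{s/u}(x_i)$, I would apply the intertwining identity
$$H^{\A,s/u}_{x_i,x_{i+1}}=\P_{x_{i+1}}\circ H^{\B,s/u}_{x_i,x_{i+1}}\circ \P_{x_i}^{-1}$$
on each leg and compose. The interior factors $\P_{x_i}^{-1}\circ \P_{x_i}$ telescope, producing $\H^{\A,P}_{x_0,x_k}=\P_{x_k}\circ \H^{\B,P}_{x_0,x_k}\circ \P_{x_0}^{-1}$, which is (b); the case $x_k=x_0$ is (a). Part (c) is then immediate: fix $x_0\in X$; by accessibility, each $x\in X$ is joined to $x_0$ by some $su$-path $P$, and (b) yields $\P_x=\H^{\A,P}_{x_0,x}\circ \P_{x_0}\circ (\H^{\B,P}_{x_0,x})^{-1}$, so $\P_x$ is determined by $\P_{x_0}$ (path-independence is automatic, since $\P_x$ is a single element of $\dzm$).

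For part (d), I would specialize the intertwining to locally $su$-close pairs: for $y\in W^{s/u}_{loc}(x)$,
$$\P_y = H^{\A,s/u}_{x,y}\circ \P_x \circ (H^{\B,s/u}_{x,y})^{-1}.$$
Continuity of the holonomies in $(x,y)$, together with $H^{\A,s/u}_{x,x}=\Id$, immediately gives continuity of $\P$ along each stable and each unstable leaf. The main obstacle is upgrading this leaf-wise continuity to ambient continuity on $X$, since in general a function continuous along two transverse foliations need not be continuous on the ambient space. I plan to close the gap by combining (i) the dynamical invariance $\P_{fx}=\A_x\circ \P_x\circ \B_x^{-1}$, (ii) continuous dependence of local $W^{s/u}$-plaques on the basepoint, and (iii) accessibility of $f$, along the lines of the continuity arguments for invariant sections developed in \cite{ASV}.

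Part (e) follows from the formula in (c) read at higher regularity. Since the holonomies lie in $\drm$ and $\P_{x_0}\in \drm$, the composition places $\P_x$ in $\drm$; uniform bounds on $|\P_x|_{C^r}$ over $x\in X$ then come from compactness of $X$ together with uniform $\drm$-continuity of the holonomies. The stated $C^p$-continuity of $\P:X\to \dpm$, with $p=r$ for integer $r$ and any $p<r$ otherwise, is inherited from the corresponding $C^p$-continuity of the holonomies via continuity of composition on $\drm$-bounded sets measured in the $C^p$ metric; the small loss of regularity in the non-integer case reflects the familiar fact that composition is not continuous on $\drm$ itself when $r$ is non-integer.
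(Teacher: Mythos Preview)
Your treatment of (a)--(c) matches the paper exactly, and for (d) you correctly identify that the passage from leaf-wise to ambient continuity is the issue and that the machinery of \cite{ASV} resolves it. The paper does precisely this: it views $\P$ as a bi-continuous section of the bundle with fiber $\dlm$ (with $\ell=\lfloor r\rfloor$, or $\ell=0$ for part (d)) under the maps $g\mapsto H^{\A}_{x,y}\circ g\circ H^{\B}_{y,x}$, and applies Theorem~E of \cite{ASV}. Your ingredient (i), the conjugacy relation $\P_{fx}=\A_x\circ\P_x\circ\B_x^{-1}$, is not used.

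Part (e), however, has two genuine gaps. First, for boundedness: compactness of $X$ and continuity of the holonomies are not enough, because the formula $\P_x=\H^{\A,P}_{x_0,x}\circ\P_{x_0}\circ(\H^{\B,P}_{x_0,x})^{-1}$ involves an $su$-path $P$ whose number of legs is, a priori, uncontrolled as $x$ varies. Each leg multiplies the $C^r$-size roughly by a power (Lemma~\ref{dlLO lemma}), so an unbounded number of legs gives no bound. The paper closes this with Wilkinson's uniform accessibility lemma \cite[Lemma~4.5]{W}: there exist $L,K$ such that any two points are joined by an $su$-path with at most $L$ legs, each of length at most $K$. Only then does iterating the composition estimate a bounded number of times yield a uniform bound on $\|\P_x\|_{C^r}$.

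Second, your mechanism for $C^p$-continuity is problematic for the same reason: the path $P_{x_0,x}$ does not vary continuously with $x$, so you cannot simply read off continuity of $\P_x$ from continuity of the holonomies in the formula. The paper instead argues indirectly: continuity of $\P$ into $\dlm$ comes from Theorem~E of \cite{ASV} (where $\ell$ is an integer, so composition is jointly continuous in $\dlm$), and then boundedness in $\drm$ plus the \emph{compact} embedding $\drm\hookrightarrow\dpm$ for $\ell\le p<r$ upgrades $C^\ell$-continuity to $C^p$-continuity via a standard subsequence argument. This compact-embedding step is what actually produces the stated regularity, and it is absent from your outline.
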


The next theorem gives a sufficient condition for existence of a continuous 
conjugacy intertwining holonomies. By the previous proposition, condition (b)
is also necessary.

\begin{theorem} \label{sufficient}
Let $f:X\to X$ be an accessible $C^1$ partially hyperbolic diffeomorphism.
Let  $\A$ and $\B$ be  $\dqm$-valued cocycles over $(X,f)$  
with holonomies  in $\drm$, where $r=0$ or $1\le r\le q$.

\vskip.1cm
\begin{itemize}

\item[{\bf (a)}] Suppose that  there exist a fixed point $x_0\in X$ and $\P_{x_0}\in  \drm$ such that 
\vskip.1cm
 (a1) $\H^{\A,P}_{x_0}=\P_{x_0}\circ \H^{\B,P}_{x_0} \circ \P_{x_0}^{-1}$
for every $su$-cycle $P_{x_0}$, and 
\vskip.1cm
(a2) $\A_{x_0}= \P_{x_0}\circ \B_{x_0} \circ \P_{x_0}^{-1}.$

\vskip.2cm
\item[{\bf (b)}] More generally, suppose that there exist $x_0\in X$ and $\P_{x_0}\in  \drm$  satisfying 
\vskip.1cm

(b1)=(a1) and
\vskip.1cm
(b2) $\A_{x_0}= \P_{f{x_0}}\circ \B_{x_0} \circ \P_{x_0}^{-1}$,
\,where 
$\,\P_{f{x_0}}=\H^{\A,\tilde P}_{{x_0},f{x_0}} \circ \P_{x_0}\circ  (\H^{\B,\tilde P}_{{x_0},f{x_0}})^{-1}$

\hskip.8cm for some $su$-path $\tilde P=\tilde P_{{x_0},f{x_0}}$ from $x_0$ to $fx_0$.
\end{itemize}
Then there exists a unique conjugacy $\P$  between $\A$ and $\B$ 
with value $\P_{x_0}$ at $x_0$ that intertwines $H^{\A}$ and $H^{\B}$.
The function $\P:X\to \drm$ is bounded and $\P:X\to \dpm$ is  continuous 
for $p=r$ if $r$ is an integer, and for any $p<r$ otherwise.

\end{theorem}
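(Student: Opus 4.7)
The plan is to construct $\P$ directly by transporting $\P_{x_0}$ along $su$-paths using the holonomies. Since $f$ is accessible, every $x\in X$ admits an $su$-path $P=P_{x_0,x}$ from $x_0$ to $x$, and I would set
$$\P_x \,=\, \H^{\A,P}_{x_0,x}\circ \P_{x_0}\circ (\H^{\B,P}_{x_0,x})^{-1}.$$
Well-definedness is the first thing to check. If $P_1,P_2$ are two such paths, the reverse-concatenation $P_2^{-1}*P_1$ is an $su$-cycle at $x_0$ whose $\A$-weight is $(\H^{\A,P_2}_{x_0,x})^{-1}\circ \H^{\A,P_1}_{x_0,x}$ (and similarly for $\B$). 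Hypothesis (a1), equivalently (b1), then converts immediately into equality of the two candidate values of $\P_x$. The intertwining of holonomies follows by appending a single edge $x\to y$ to an $su$-path from $x_0$ to $x$ and comparing the formulas for $\P_x$ and $\P_y$.

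The substantive step is the conjugacy equation $\A_x=\P_{fx}\circ \B_x\circ \P_x^{-1}$. The key tool is the equivariance of holonomies under the cocycle dynamics,
$$H^{\A,s/u}_{fx,fy}\,=\,\A_y\circ H^{\A,s/u}_{x,y}\circ \A_x^{-1},$$
which follows directly from the limit definition and the cocycle equation and holds in $\drm$. Iterating it along an $su$-path $P=\{x_0,y_1,\ldots,y_k=x\}$---and using that $f$ maps $su$-paths to $su$-paths---the intermediate factors $\A_{y_i}$ telescope and yield
$$\H^{\A,f(P)}_{fx_0,fx}\,=\,\A_x\circ \H^{\A,P}_{x_0,x}\circ \A_{x_0}^{-1},$$
together with the analogous relation for $\B$. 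In case (a) one has $fx_0=x_0$, so $f(P)$ joins $x_0$ to $fx$; substituting into the formula defining $\P_{fx}$ via $f(P)$ and using $\A_{x_0}^{-1}\circ \P_{x_0}\circ \B_{x_0}=\P_{x_0}$ from (a2) gives the conjugacy equation. In case (b), $x_0$ need not be fixed, so instead I would compute $\P_{fx}$ using the path $\tilde P*f(P)$ from $x_0$ to $fx$, which is legitimate by well-definedness. The portion $\tilde P$ produces $\P_{fx_0}$ in the intermediate expression, and (b2) plays the role of (a2).

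Uniqueness, boundedness of $\P:X\to\drm$, and continuity of $\P:X\to\dpm$ for the stated $p$ then follow immediately from Proposition \ref{prop of inter conj} applied to the conjugacy just constructed. The main obstacle is the conjugacy-equation step: the bookkeeping of compositions is already delicate in case (a), and case (b) additionally requires checking that the particular path choice $\tilde P*f(P)$ is compatible with the well-definedness established earlier, so that the intermediate value $\P_{fx_0}$ actually coincides with the one prescribed by (b2).
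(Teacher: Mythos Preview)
Your proposal is correct and follows essentially the same route as the paper: define $\P_y$ by transporting $\P_{x_0}$ along an $su$-path via the $\A$- and $\B$-holonomies, use (b1) for well-definedness, read off the intertwining, derive the conjugacy equation from the equivariance $\H^{\A,f(P)}_{fx_0,fx}=\A_x\circ \H^{\A,P}_{x_0,x}\circ \A_{x_0}^{-1}$ together with (b2), and invoke Proposition~\ref{prop of inter conj} for uniqueness, boundedness and continuity. The only cosmetic difference is that in case (b) the paper first records the general relation $\P_w=\H^{\A,P}_{z,w}\circ \P_z\circ (\H^{\B,P}_{z,w})^{-1}$ for arbitrary $z,w$ and applies it with $z=fx_0$, $w=fx$, $P=f(P)$, whereas you concatenate $\tilde P*f(P)$ back to $x_0$; the two computations are equivalent.
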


Case (a) can be viewed 
as a sufficient condition for extending a conjugacy from a given value at a fixed point.
The value $\P_{f{x_0}}$ in (b2) does not depend on the choice of a path $P_{x_0,fx_0}$
due to the first assumption. If $x_0$ is a fixed point for $f$, then for the trivial path from 
$x_0$ to $fx_0=x_0$ the condition in (b2)  becomes 
$\A_{x_0}=\P_{x_0} \circ \B_{x_0} \circ \P_{x_0}^{-1}$,
so (b) indeed generalizes (a).

\vskip.1cm

As a corollary of Theorem \ref{sufficient}(b) we obtain the following result on conjugacy
to a constant cocycle. A similar result was established in \cite[Proposition 5.6]{pKtN}
for cocycles which depend
smoothy on the base point over partially hyperbolic systems satisfying a stronger accessibility assumption.

\begin{corollary}  \label{cohomologous to constant}
Let $f:X\to X$ be an accessible $C^1$ partially hyperbolic diffeomorphism.
Let  $\A$  be  $\dqm$-valued cocycle over $(X,f)$  
with holonomies  in $\drm$, where $r=0$ or $1\le r\le q$.
Suppose that 
\begin{equation} \label{trivial H}
\H^{\A,P_{x_0}}_{x_0}=\Id \quad  \text{ for 
every $su$-cycle $P_{x_0}$ based at some  point ${x_0}\in X$}.
\end{equation}  
Then there exists a bounded conjugacy $\P:X\to \drm$ between $\A$ and a {\em constant
cocycle} such that  
$\P:X\to \dpm$ is  continuous 
for $p=r$ if $r$ is an integer, and for any $p<r$ otherwise, and 
$\P$ satisfies 
\begin{equation}\label{PP}
\P_y\circ \P_x^{-1} = H_{x,y}^{\A,s/u}\;\text{  for all $x,y \in X$ such that $y\in W^{s/u}(x)$.}
\end{equation}
In particular, if $\A_{x_0}=\Id$ at a fixed point $x_0$ and \eqref{trivial H} holds,
then $\A$ is conjugate to the identity cocycle via $\P$ as above with $\P(x_0)=\Id$.
\end{corollary}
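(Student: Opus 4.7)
The plan is to reduce this corollary to Theorem \ref{sufficient}(b) by taking $\B$ to be a suitable constant cocycle and setting $\P_{x_0}=\Id$. The key observation is that for any constant cocycle $\B_x\equiv B$ with $B\in\drm$, the stable and unstable holonomies exist in $\drm$ and are identically trivial: since
\[
(\B^n_y)^{-1}\circ \B^n_x = B^{-n}\circ B^n = \Id
\]
for every $n\in\Z$ and every $x,y\in X$, one has $H^{\B,s/u}_{x,y}=\Id$ whenever $y\in W^{s/u}(x)$, and hence $\H^{\B,P}_{z}=\Id$ for every $su$-cycle or $su$-path.

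With this choice, condition (b1) of Theorem \ref{sufficient} reads $\H^{\A,P_{x_0}}_{x_0} = \P_{x_0}\circ \Id\circ \P_{x_0}^{-1}=\Id$, which is exactly the hypothesis \eqref{trivial H}. To arrange (b2), I would invoke accessibility of $f$ to pick an $su$-path $\tilde P=\tilde P_{x_0,fx_0}$ from $x_0$ to $fx_0$ and then define the constant value
\[
B \;:=\; (\H^{\A,\tilde P}_{x_0,fx_0})^{-1}\circ \A_{x_0} \;\in\;\drm.
\]
With $\P_{x_0}=\Id$, the prescription in (b2) forces $\P_{fx_0}=\H^{\A,\tilde P}_{x_0,fx_0}$, and then $\A_{x_0}=\P_{fx_0}\circ B\circ \P_{x_0}^{-1}$ holds by the definition of $B$. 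The remark after Theorem \ref{sufficient} ensures that $\P_{fx_0}$, and therefore $B$, is independent of the choice of $\tilde P$ precisely because (b1) holds.

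Applying Theorem \ref{sufficient}(b) then yields a bounded conjugacy $\P:X\to\drm$ between $\A$ and the constant cocycle $\B$, continuous as a map into $\dpm$ with the stated $p$, and intertwining the holonomies of $\A$ and $\B$. Since $H^{\B,s/u}_{x,y}=\Id$, the intertwining relation becomes
\[
H^{\A,s/u}_{x,y} \;=\; \P_y\circ H^{\B,s/u}_{x,y}\circ \P_x^{-1} \;=\; \P_y\circ \P_x^{-1},
\]
which is \eqref{PP}. For the final assertion, when $x_0$ is a fixed point of $f$ with $\A_{x_0}=\Id$, I take $\tilde P$ to be the trivial path at $x_0$, so that $\H^{\A,\tilde P}_{x_0,x_0}=\Id$ and hence $B=\Id$; thus $\B$ is the identity cocycle and $\P(x_0)=\Id$, as claimed.

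The argument is essentially a direct reduction, and there is no genuine obstacle once one recognizes that constant cocycles have trivial holonomies: this makes (b1) coincide with \eqref{trivial H}, gives the conclusion \eqref{PP} for free from intertwining, and leaves only the routine choice of the constant $B$, which is delivered by any accessible $su$-path from $x_0$ to $fx_0$.
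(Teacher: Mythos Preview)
Your proof is correct and follows essentially the same approach as the paper: both observe that a constant cocycle has trivial holonomies, so (b1) reduces to \eqref{trivial H}, then define the constant value $B$ via an $su$-path from $x_0$ to $fx_0$ so that (b2) holds with $\P_{x_0}=\Id$, apply Theorem~\ref{sufficient}(b), and read off \eqref{PP} from the intertwining relation. The fixed-point case is handled identically.
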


If condition  \eqref{trivial H} holds for some $x_0\in X$, then it holds for every $x\in X$,
since by accessibility  for any $su$-cycle  based at $x$ one can consider 
a corresponding $su$-cycle based at $x_0$.

We note  that a constant cocycle  conjugate to $\A$ is not unique in general. 
Also, if $\A$ is conjugate to a constant cocycle via a conjugacy  intertwining 
their holonomies, then \eqref{PP} holds and \eqref{trivial H} follows.

\vskip.1cm

The paper is organized as follows. 
In Section \ref{dist est} we define distances on the spaces $\drm$ and 
give estimates for norms and distances between compositions of diffeomorphisms.
In Section \ref{sec  hol} we formulate and prove results on existence and properties of 
holonomies of $\dqm$-valued cocycles. In Section \ref{proof isometric} we prove Theorem
\ref{isometric}; in Section \ref{5} we prove Proposition \ref{prop of inter conj}, 
Theorem \ref{partially hyperbolic} and Corollary \ref{hyperbolic};
 and  in Section  \ref{6} we prove  Proposition \ref{C0 int}, Theorem \ref{sufficient},
and Corollary \ref{cohomologous to constant}.

%%%%%%%%%%%%%%%%%%%%%%%%%%%%
 %%%%%%%%%%%%%%%%%%%%%%%%%%%%%%%%

\section{Distances on $\drm$ and  estimates}\label{dist est}

\subsection{Distances on the space of diffeomorphisms $\drm$} \label{Crdist}
(\cite[Section~5]{LW},  \cite[Section 2.2]{S19})\,
We fix a smooth background Riemannian metric and the corresponding distance 
$d_\M$ on $\M$.

We denote the space of homeomorphisms of $\M$ by $\dzm$, and for $g,h\in \dzm$  we set
\begin{equation}\label{d0}
d_0(g,h)= \max_{t\in \M}\, d_\M(g(t),h(t)) \quad\text{and}\quad
d_{C^0}(g,h)=d_0(g,h)+ d_0(g^{-1},h^{-1}).
\end{equation}

Now we consider $r\ge 1$. 
The $C^r$ topology on the group of diffeomorphisms  $\drm$ can
be defined using coordinate patches and the $C^r$ norm in the Euclidean space. 
For any $g\in \drm$,  $r\in \N=\{1,2, \dots \}$, we define its $C^r$ size  as
$$
  |g|_{C^r}= \|g\|_{C^r}+ \|g^{-1}\|_{C^r} , \quad \text{where }\; 
  \|g\|_{C^r}= \max_{t\in \M} d_\M(g(t),t)+ \max_{1 \le i \le r}\max_{t\in \M} \|D_t^ig\|, 
$$
where $D_t^i\,g$ is the derivative of $g$ of order $i$ at $t$, and its norm is defined as
the norm of the corresponding multilinear form from $T_t\M$ to $T_{g(t)}\M$ 
with respect the Riemannian metric. For $r=k+\a$ with $k\in \N$ and  $0<\a\le1^-$, where $\a=1^-$ means Lipschitz, the definition is similar with
$$
 \|g\|_{C^r}=\|g\|_{C^k} + 
 \sup \,\{\, \|D^k_t -D^k_{t'}\|\cdot d_\M(t,t')^{-\alpha} : \;t, t'\in \M, \; 0< d_\M(t,t') <\e_0 \}.
$$
Here $\e_0$ is chosen small compared to the injectivity radius of $\M$ so that
the tangent bundle is locally trivialized via parallel transport and the difference makes sense.

\vskip.1cm 
A natural distance $d_{C^r}(g,h)$ % that generates the $C^r$ topology
on $\drm$,  $r\in \N$,
was defined in \cite{LW}  as the infimum of the lengths of piecewise $C^1$ paths  
in $\drm$ connecting $h$ with $g$  and $h^{-1}$ with $g^{-1}$,
where the length of a path $p_s$ is 
$$
\ell_{C^r} (p_s) =\max_{0 \le i \le r}\max_{t\in \M}\int \|\tfrac d{ds} (D_t^i \, p_s) \|\,ds.
$$
For $r=k+\a$,  one also adds 
the corresponding H\"older term:
$$
\begin{aligned}
&\ell_{C^r} (p_s) =\ell_{C^k} (p_s) 
+ \max_{t\in \M}\int |\tfrac d{ds}  \| (D^k p_s) \|_{\a,t}|\,ds, \quad\text{where}\\
& \|D^k g \|_{\a,t}= \sup \,\{\, \|D^k_{t'} -D^k_{t}\|\cdot d_\M(t',t)^{-\alpha} : 
 \,\;t'\in \M, \;\, 0< d_\M(t,t') <\e_0 \}.
 \end{aligned}
$$
For sufficiently $C^r$-close diffeomorphisms, the distance $d_{C^r}(g,h)$ is Lipschitz equivalent 
to $ \|g-h\|_{C^r}+ \|g^{-1}-h^{-1}\|_{C^r}$, where  the difference is understood 
using local trivialization. Specifically,
there exist  constants $\kappa$ and $\delta_0>0$ depending only on $r$ and 
the Riemannian metric so that 
\begin{equation}\label{compare 1}
\begin{aligned}
&\kappa^{-1} d_{C^r}(g,h) \le  \|g-h\|_{C^r}+ \|g^{-1}-h^{-1}\|_{C^r} \le \kappa \, d_{C^r}(g,h), \;\text{ provided that }\\
&\text{either $\;d_{C^r}(g,h)< \delta_0 |g|_{C^r}^{-1}\;$ or 
$\;\|g-h\|_{C^r}+ \|g^{-1}-h^{-1}\|_{C^r}<\delta_0 |g|_{C^r}^{-1}$.}
\end{aligned}
\end{equation}

%%%%%%%%%%%%%%%%%%%%%%%%%%%%%%%%%%%%

\subsection{Estimates of norms and distances}\,
Lemma \ref{dlLO lemma} follows directly from Proposition 5.5 in \cite{dlLO},
and Lemma \ref{distance}  relies on further results in that paper.

\begin{lemma}\cite{dlLO}\label{dlLO lemma} 
For any $r\ge 1$ there exists a constant  $M_r$ such that for any $h,g \in C^r(\M)$,
$$
  \| h\circ g\|_{C^r}  \le M_r \,  \|h\|_{C^r} (1+\| g\|_{C^r})^r .
  $$
\end{lemma}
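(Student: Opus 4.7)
The plan is to establish the bound directly from Faà di Bruno's formula for higher derivatives of a composition. For an integer $r = k \in \N$, the higher chain rule expresses
$$
D^k_t(h\circ g) \;=\; \sum_{\pi} c_\pi \, D^{|\pi|}_{g(t)} h \;\bigl(D^{|B_1|}_t g, \dots, D^{|B_{|\pi|}|}_t g\bigr),
$$
where $\pi = \{B_1,\dots,B_{|\pi|}\}$ ranges over the finitely many partitions of $\{1,\dots,k\}$. Each term is a multilinear contraction with pointwise norm bounded by $\|h\|_{C^k}\prod_i \|D^{|B_i|}g\| \le \|h\|_{C^r}\,\|g\|_{C^r}^{|\pi|}$, and since $1 \le |\pi| \le k$, summing over the partitions yields a bound $C_k\,\|h\|_{C^r}(1+\|g\|_{C^r})^k$ on $\|D^k(h\circ g)\|$. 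The zero-order term is controlled by $d_\M(h(g(t)),t) \le d_\M(h(g(t)),g(t))+d_\M(g(t),t)\le \|h\|_{C^r} + \|g\|_{C^r}$, which completes the integer case.

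For fractional $r = k+\a$ with $0<\a<1$, I further bound the $\a$-Hölder seminorm of $D^k(h\circ g)$. Telescoping each Faà di Bruno term by replacing its factors one at a time, the difference between evaluation at $t$ and at $t'$ splits into contributions where either some $D^{|B_i|}g$ is replaced by its value at $t'$, or the outer factor $D^{|\pi|}h\circ g$ is replaced accordingly. For any block with $|B_i| < k$, the next derivative $D^{|B_i|+1}g$ exists and is bounded by $\|g\|_{C^r}$, so $D^{|B_i|}g$ is Lipschitz with constant $\le \|g\|_{C^r}$. For the unique partition into singletons (the only one with $|\pi|=k$), the Hölder regularity of $D^kh$ must be invoked, and the displacement estimate $d_\M(g(t),g(t')) \le \|Dg\|\,d_\M(t,t')$ converts $\|D^kh\|_{\a,g(t)} \le \|h\|_{C^r}$ into an $\a$-Hölder bound at the cost of an extra factor $\|Dg\|^\a \le \|g\|_{C^r}^\a$. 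For all remaining partitions, $|\pi|\le k-1$ and $D^{|\pi|}h$ is Lipschitz with constant $\|h\|_{C^r}$, so no Hölder estimate on $h$ enters.

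Collecting contributions from all partitions, each term is bounded by $C\,\|h\|_{C^r}\cdot\|g\|_{C^r}^{|\pi|+\a}\cdot d_\M(t,t')^\a$ in the worst case, with $|\pi|+\a \le k+\a = r$; summing yields the desired bound by $M_r\|h\|_{C^r}(1+\|g\|_{C^r})^r$. The main obstacle is the combinatorial bookkeeping, and in particular ensuring that the total degree in $\|g\|_{C^r}$ does not exceed $r$ in the Hölder case; this is precisely what is carried out in \cite[Proposition~5.5]{dlLO}, which permits one to shortcut the entire argument as indicated in the statement.
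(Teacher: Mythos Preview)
Your proposal is correct and matches the paper's approach exactly: the paper does not prove this lemma but simply states that it ``follows directly from Proposition~5.5 in \cite{dlLO},'' which is the same reference you invoke at the end of your sketch. Your Fa\`a di Bruno outline is a faithful summary of what that proposition does, so there is nothing to add.
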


\begin{lemma}\cite[Lemma 3.6]{S19}\label{distance} 
Let $q=k+\gamma$, $r=k+\a$, and $\rho=q-r$, where $k\in \N$ and $0\le\a<\gamma\le1^-.$
There exists a constant $M=M(r,q,\M,K)$ such that 
for any $g, \tg \in \dqm$ and $h_1, h_2\in \drm$  with $|h_1|_{C^r}, |h_2|_{C^r}\le K,$
\begin{equation} \label{dist}
\begin{aligned}
&d_{C^r} (g\circ h_1\circ \tg,\, g\circ h_2 \circ \tg) \le \\
&\le M \left( \| g\|_{C^q} (1+\| \tg\|_{C^r})^r + \| \tg^{-1}\|_{C^q} (1+ \| g^{-1}\|_{C^r})^r \right)
 \cdot d_{C^r}(h_1, h_2)^\rho
\end{aligned}
\end{equation} 
provided that $d_{C^r}(h_1, h_2)\le \delta_0 |h_1|_{C^r}^{-1}$ and  the right hand side 
of \eqref{dist} is less than 
\begin{equation} \label{delta 0}
\delta_0 \left( M_r^{2}\, (1+| h_1|_{C^r})^{r}(   \|g\|_{C^r}  \, (1+\| \tg\|_{C^r})^r +
 \|\tg^{-1}\|_{C^r}   \, (1+\| g^{-1}\|_{C^r})^r )\right)^{-1}, 
\end{equation} 
 where $ \delta_0$ is as in \eqref{compare 1}.  
\end{lemma}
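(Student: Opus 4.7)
My plan is to reduce the length-based distance $d_{C^r}$ between the two triple compositions to a norm-based distance via \eqref{compare 1}, and then bound the resulting $C^r$-norm by the Fa\`a di Bruno chain rule combined with a nonlinear interpolation that exploits the extra $\gamma$-H\"older regularity of $D^kg$ and $D^k\tg$. The smallness hypothesis on $d_{C^r}(h_1,h_2)$ and on the right-hand side of \eqref{dist}, spelled out in \eqref{delta 0}, is calibrated exactly so that the two distances on $\drm$ are Lipschitz equivalent, allowing the constant $\kappa$ of \eqref{compare 1} to be absorbed into $M(r,q,\M,K)$. The inverse composition $\tg^{-1}\circ h_i^{-1}\circ g^{-1}$ is handled symmetrically; since inversion is locally Lipschitz on $\{|h|_{C^r}\le K\}\subset \drm$, replacing $h_i^{-1}$ by $h_i$ only changes the constant, which is why the final bound is symmetric in $g$ and $\tg^{-1}$.

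Next I would compute $D^j(g\circ h_i\circ\tg)$ by Fa\`a di Bruno for $0\le j\le k$ and split the difference into terms of two kinds: (i) terms where a derivative $D^{j_1}g$ with $j_1\le k$ is evaluated at $h_1\circ\tg(t)$ versus $h_2\circ\tg(t)$, and (ii) terms containing a factor $D^{j_2}(h_1-h_2)$ with $j_2\le k$. For $0\le j\le k-1$ both kinds of factors are controlled by $d_{C^r}(h_1,h_2)$ with coefficients polynomial in $\|g\|_{C^q}$, $\|\tg\|_{C^r}$, and $K$; since the smallness hypothesis makes $d_{C^r}(h_1,h_2)\le 1$, this lower-order part is dominated by $d_{C^r}(h_1,h_2)^\rho$. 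The heart of the argument is the $C^\alpha$-seminorm of the top-order term $D^k(g\circ h_1\circ\tg) - D^k(g\circ h_2\circ\tg)$, whose leading Fa\`a di Bruno piece decomposes as
$$
[D^kg(h_1\circ\tg)-D^kg(h_2\circ\tg)]\,(D\tg)^{\otimes k}(Dh_1\circ\tg)^{\otimes k} + D^kg(h_2\circ\tg)\,(D\tg)^{\otimes k}\,[(Dh_1\circ\tg)^{\otimes k}-(Dh_2\circ\tg)^{\otimes k}].
$$
To the first summand I would apply the elementary interpolation $|F(t)-F(t')|\le A^{1-\alpha/\gamma}(B\,d(t,t')^\gamma)^{\alpha/\gamma}$ with uniform bound $A=2\|g\|_{C^q}\|h_1-h_2\|_{C^0}^\gamma$ (coming from $D^kg$ being $\gamma$-H\"older) and spatial $\gamma$-H\"older bound $B$ polynomial in $\|g\|_{C^q}$, $\|h_i\|_{C^1}$, and $\|\tg\|_{C^1}$; this produces an $\alpha$-H\"older constant of order $\|g\|_{C^q}(1+\|\tg\|_{C^r})^r\,d_{C^r}(h_1,h_2)^\rho$. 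The second summand is directly bounded by $\|g\|_{C^q}$ times a polynomial in $\|\tg\|_{C^r}$ times $d_{C^r}(h_1,h_2)$, hence by the $\rho$-power as before. The symmetric computation on the inverse piece produces the companion summand $\|\tg^{-1}\|_{C^q}(1+\|g^{-1}\|_{C^r})^r$.

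The main obstacle is precisely this nonlinear interpolation that converts uniform $\gamma$-H\"older control of $D^kg$ into $\alpha$-H\"older control of the composed object with the sharp exponent $\rho=q-r$; this is the content of the composition estimates in \cite[Section 5]{dlLO}, starting from Proposition 5.5 already cited in Lemma \ref{dlLO lemma}. The remaining work is bookkeeping all multilinear Fa\`a di Bruno terms and verifying that the coefficients depend only on $r$, $q$, $\M$, and $K$ once $|h_i|_{C^r}\le K$ is imposed; this is routine but notationally heavy. Since the statement is recorded as \cite[Lemma 3.6]{S19}, after carrying out the reduction described above one may simply invoke that reference for the remaining detailed estimates.
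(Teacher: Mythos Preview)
The paper does not give its own proof of this lemma: it is quoted verbatim from \cite[Lemma~3.6]{S19} and used as a black box, with no argument supplied in the present paper. So there is nothing to compare your proposal against here beyond the bare citation.

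That said, your sketch is a faithful outline of how the result is actually obtained in \cite{S19}, which in turn rests on the composition estimates of \cite{dlLO}: pass from $d_{C^r}$ to the local norm difference via \eqref{compare 1} (this is exactly why the smallness hypotheses involving $\delta_0$ and \eqref{delta 0} are present), expand $D^k(g\circ h_i\circ\tg)$ by Fa\`a di Bruno, and handle the top term by interpolating the $\gamma$-H\"older modulus of $D^kg$ against the uniform bound $\|g\|_{C^q}\,d_0(h_1,h_2)^\gamma$ to extract the exponent $\rho=\gamma-\alpha$. Your identification of the key step --- the nonlinear H\"older interpolation yielding $d_{C^r}(h_1,h_2)^\rho$ from the extra $\gamma$-regularity of $g$ and $\tg^{-1}$ --- is exactly right, and the symmetry between the two summands in \eqref{dist} does come from treating the inverse composition $\tg^{-1}\circ h_i^{-1}\circ g^{-1}$ in parallel. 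For the purposes of this paper, simply citing \cite[Lemma~3.6]{S19} as the author does is sufficient; your added detail is correct but not required.
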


%%%%%%%%%%%%%%%%%%%%%%%%
%%%%%%%%%%%%%%%%%%%%%%%%%%

\section{Existence and properties of the holonomies}\label{sec  hol}

In this section, we state and prove some results on existence and properties of the holonomies,
first in $\dzm$ and then in $\drm$, $r\ge1$.
We formulate the results for stable holonomies. We note that we only consider 
pairs of points lying on the same stable leaf, and so only the contracting property 
of the stable leaves plays a role.
The statements and proofs for the unstable holonomies are similar. 

The local stable manifold of $x$, $W^{s}_{loc}(x)$, is 
a ball in $W^s(x)$  centered at $x$ of a small radius $\rho$. We choose $\rho$ sufficiently small 
so that, in particular, for $\la$ as in \eqref{lambda}, 
\begin{equation}\label{exp conv}
d_X (f^nx, f^ny)\le \la^n d_X(x,y)  \quad\text{for all }x\in X,\; y\in W^s_{loc}(x),\text{ and }n\in \N.
\end{equation}
\vskip.1cm

Results similar to the next proposition can be found in \cite[Proposition 3.3]{pKtN} and 
\cite[Proposition 3.10]{ASV}. We state and prove the result, 
including H\"older continuity along the leaves, joint continuity, and uniqueness,
under a weaker assumption of H\"older continuity of $\A_x$ as a function into $\dzm$ 
rather than $\dom$.

\begin{proposition} \label{C0 hol}
Let $(X,f)$ be a hyperbolic or partially hyperbolic system, and let 
$\A$ be a  $\dom$-valued cocycle over $(X,f)$
so that $\A_x:X\to \dzm$ is $\beta$-H\"older continuous.
Suppose that there exist constants $K$ and $\,\sigma$ such that 
\begin{equation}\label{A bunch}
\sigma \la^\beta <1\;\text{  and }\;\,
\| D \A^n_x\| \le K \sigma^{|n|} 
\;\text{ for every $x\in X$ and $n\in \Z$}.
\end{equation}
Then for any $x\in X$ and $y\in W^s(x)$,
the limit 
$H^{\A,s}_{x,y}=\underset{n\to+\infty}{\lim} (\A^n_y)^{-1} \circ \A^n_x$
exists in $\dzm$ and satisfies 
\vskip.2cm
\begin{itemize}
\item[{\bf (H1)}$\;$] $H^{\A,s}_{x,x}=\Id\,$ and $\,H^{\A,s}_{y,z} \circ H^{\A,s}_{x,y}=H^{\A,s}_{x,z}$,\,\,
which imply $(H^{\A,s}_{x,y})^{-1}=H^{\A,s}_{y,x};$
\vskip.1cm
\item[{\bf (H2)}$\;$] $H^{\A,s}_{x,y}= (\A^n_y)^{-1}\circ H^{\A,s}_{f^nx ,\,f^ny} \circ \A^n_x\;$ 
for all $n\in \N$;
\vskip.1cm
\item[{\bf (H3$^0$)}]  There exists a constant $c$ such that 
$$
d_{C^0}(H^s_{x,y},\Id)  \le  c\, d_X(x,y)^{\beta}\quad\text{ for all $x\in X$ and $y\in W^s_{loc}(x)$};
$$

\item[{\bf (H4$^0$)}]  The map $(x,y)\mapsto H^{\A, s}_{x,y}$ into $\dzm$ is continuous 
on the set of pairs  $(x,y)$,
where $x\in X$ and $y\in W^{s}_{loc}(x)$.
\vskip.2cm

\item[{\bf (H5)}$\;$]   The  homeomorphisms $H^{\A,s}_{x,y}$ 
  are H\"older continuous with uniform exponent   and
   constant  for all  pairs $(x,y)$ as above.
\end{itemize}
\vskip.1cm

\noindent Moreover, a family of maps $\{H_{x,y}^{\A,s}: \; x\in X, \; y\in W^s(x)\}$ in $\dzm$ satisfying 
{\em (H2)}  and {\em (H3$^0$)} is unique.

\end{proposition}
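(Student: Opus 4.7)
The plan is to let $h_n(x,y):=(\A^n_y)^{-1}\circ \A^n_x$, so that $h_0=\Id$, and show this sequence is Cauchy in $d_{C^0}$ uniformly for $y\in W^s_{loc}(x)$. Writing $h_{n+1}$ through the cocycle identity as $(\A^n_y)^{-1}\circ A(f^ny)^{-1}\circ A(f^nx)\circ \A^n_x$, for any $t\in\M$ and $s=\A^n_x(t)$ I would estimate
\[
 d_\M\bigl(h_n(t),h_{n+1}(t)\bigr)\le \|D(\A^n_y)^{-1}\|_{C^0}\cdot \|DA(f^ny)^{-1}\|_{C^0}\cdot d_0\bigl(A(f^nx),A(f^ny)\bigr).
\]
The bunching hypothesis \eqref{A bunch} bounds the first two factors by $K\sigma^n$ and $K\sigma$, while the $\beta$-Hölder continuity of $A:X\to\dzm$ together with \eqref{exp conv} bounds the third by $c_0\la^{n\beta}d_X(x,y)^\beta$. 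A symmetric computation on $h_n^{-1}=(\A^n_x)^{-1}\A^n_y$ then yields $d_{C^0}(h_n,h_{n+1})\le C(\sigma\la^\beta)^n d_X(x,y)^\beta$. Since $\sigma\la^\beta<1$, $h_n$ is Cauchy in the complete space $\dzm$, and summing from $h_0=\Id$ establishes the existence of $H^{\A,s}_{x,y}$ together with (H3$^0$). For $y\in W^s(x)$ that is not local, the limit is then obtained by pre- and post-composing a local limit with fixed iterates of $\A$.

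The algebraic and continuity properties should follow quickly. For (H1), the equality at $x=y$ holds already at each level $h_n$; the chain rule uses the factorization $(\A^n_z)^{-1}\A^n_x=[(\A^n_z)^{-1}\A^n_y]\circ[(\A^n_y)^{-1}\A^n_x]$ and the joint continuity of composition in $d_{C^0}$. Property (H2) is read off by letting $n\to\infty$ in the identity $\A^{n+m}_x=\A^n_{f^mx}\circ\A^m_x$. For (H4$^0$) each $h_n(x,y)$ is jointly continuous in $(x,y)$, and the Cauchy bound above is uniform on the set of pairs with $y\in W^s_{loc}(x)$; uniform convergence into the complete space $\dzm$ then transfers continuity to the limit.

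I expect (H5) to be the main obstacle, because $\|Dh_n\|$ only has the bound $K^2\sigma^{2n}$, so fiberwise Hölder control cannot be inherited directly from the $h_n$. The plan is to invoke (H2) at a level $n$ chosen depending on $d_\M(t,t')$. From
\[
H^{\A,s}_{x,y}(t)=(\A^n_y)^{-1}\bigl(H^{\A,s}_{f^nx,f^ny}(\A^n_x(t))\bigr),
\]
together with $\|D\A^n_x\|,\|D(\A^n_y)^{-1}\|\le K\sigma^n$ and the estimate $d_0(H^{\A,s}_{f^nx,f^ny},\Id)\le c\la^{n\beta}d_X(x,y)^\beta$ from (H3$^0$), one obtains
\[
d_\M\bigl(H^{\A,s}_{x,y}(t),H^{\A,s}_{x,y}(t')\bigr)\le K^2\sigma^{2n}d_\M(t,t')+2cK(\sigma\la^\beta)^n d_X(x,y)^\beta.
\]
Optimizing the integer $n$ to balance the two terms yields a uniform bound of order $d_X(x,y)^\beta\,d_\M(t,t')^\alpha$ with exponent $\alpha=\log(\sigma\la^\beta)^{-1}/\log(\sigma\la^{-\beta})\in(0,1]$ (the hypotheses force $\sigma\ge 1$ via $D\A^n_x\cdot D(\A^n_x)^{-1}=\Id$); since $d_X(x,y)\le \rho$ on $W^s_{loc}(x)$ the Hölder constant is uniform, and the bound for $(H^{\A,s}_{x,y})^{-1}=H^{\A,s}_{y,x}$ follows by swapping the roles of $x$ and $y$.

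For uniqueness, if $H$ and $H'$ both satisfy (H2) and (H3$^0$), iterating (H2) presents each of them as $(\A^n_y)^{-1}\circ H^{(\prime)}_{f^nx,f^ny}\circ \A^n_x$; the triangle inequality together with (H3$^0$) applied to both and $\|D(\A^n_y)^{-1}\|\le K\sigma^n$ gives $d_0(H_{x,y},H'_{x,y})\le 2cK(\sigma\la^\beta)^n d_X(x,y)^\beta\to 0$, so $H=H'$ on locally stable pairs and hence on all of $W^s(x)$ by (H1) after finitely many local steps.
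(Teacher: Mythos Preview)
Your argument for existence of the limit, (H1)--(H4$^0$), and uniqueness follows the same line as the paper's: show $h_n=(\A^n_y)^{-1}\circ\A^n_x$ is Cauchy in $d_{C^0}$ with increment bounded by a constant times $(\sigma\lambda^\beta)^n d_X(x,y)^\beta$, sum to get (H3$^0$), and use uniform convergence for (H4$^0$). One small slip: in the uniqueness step you extend from local pairs to all of $W^s(x)$ ``by (H1) after finitely many local steps,'' but only (H2) and (H3$^0$) are hypothesized there; you should instead invoke (H2), which directly gives $H_{x,y}=(\A^n_y)^{-1}\circ H_{f^nx,f^ny}\circ\A^n_x$ with $f^ny\in W^s_{loc}(f^nx)$ for large $n$ --- exactly as the paper does.

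The substantive difference is (H5). The paper does not prove it but cites \cite[Proposition~3.10]{ASV}. Your direct argument --- use (H2) at level $n$ together with (H3$^0$) for the iterated pair to obtain
\[
d_\M\bigl(H^{\A,s}_{x,y}(t),H^{\A,s}_{x,y}(t')\bigr)\le K^2\sigma^{2n}\,d_\M(t,t')+2cK(\sigma\lambda^\beta)^n d_X(x,y)^\beta,
\]
then optimize over $n$ --- is correct and genuinely more self-contained; it also yields the explicit exponent $\alpha=\log(\sigma\lambda^\beta)^{-1}/\log(\sigma\lambda^{-\beta})$. (A minor correction: the optimized bound is of order $d_X(x,y)^{\beta(1-\alpha)}\,d_\M(t,t')^\alpha$, not $d_X(x,y)^{\beta}\,d_\M(t,t')^\alpha$; this is harmless since $d_X(x,y)\le\rho$ on $W^s_{loc}(x)$ and gets absorbed into the constant.) What you gain is an elementary, quotable proof with explicit exponents; what the citation to \cite{ASV} buys is brevity.
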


\noindent With $\la =\max_{x\in X}  \hat \la(x)$, a similar result holds for the unstable holonomies that satisfy
the following in place of (H2):
\begin{itemize}

\item[{\bf (H2$'$)}]  $H^{\A,u}_{x,\,y}=
(\A^{-n}_y)^{-1} \circ H^{\A,u}_{f^{-n}x ,\,f^{-n}y}  \circ \A^{-n}_x\;$ 
 for all $n\in \N$.
 \end{itemize}

\begin{remark}

Sometimes, holonomies of a cocycle are defined as {\em any} family of homeomorphisms 
$\tilde H^{A,\, s}_{x,y}$ satisfying  properties {\em (H1, H2, H4$^0$)}, and {\em (H5)} 
for non-linear cocycles, see e.g. \cite{ASV},
and the holonomies $H^{A,s}$ as in Definition \ref{def hol} are referred to as {\em standard 
holonomies} to distinguish them \cite{KS16}. Without condition {\em (H3$^0$)}
uniqueness of holonomies may fail even for linear 
cocycles, as discussed  in \cite{KS16} after Corollary~4.9. 
\end{remark}

\begin{proof}
The maps $H^s_{x,y}$ are constructed for $x\in X$ and $y\in W_{loc}^s(x)$,
and the extended to the whole leaf $W^s(x)$ by the invariance property (H2).
We fix $x$ and $y\in W_{loc}^s(x)$ and consider the sequence of diffeomorphisms 
$( (\A^n_y)^{-1}\circ  \A^n_x )_{n\ge 0}$.

For $g_1,g_2\in \dzm$, we consider $ d_0 (g_1,g_2)$ and $d_{C^0} (g_1,g_2)$
as in \eqref{d0}.
We write $x_n$ for $f^nx$ and $y_n$ for $f^ny$.
Since $\A$ is $\beta$-H\"older continuous 
and \eqref{exp conv} holds, we obtain
\begin{equation}\label{dAA}
\begin{aligned}
& d_0  (\A_{y_n}^{-1} \circ \A_{x_n}, \, \Id)= 
d_0  (\A_{y_n}^{-1} \circ \A_{x_n}, \, \A_{x_n}^{-1} \circ \A_{x_n})
= d_0  (\A_{y_n}^{-1}, \, \A_{x_n}^{-1}) \le \\
&\le d_{C^0}  (\A_{y_n}, \, \A_{x_n}) \le K_1 \, d_X(x_n,y_n)^\b 
\le K_1 \, d_X (x,y)^\b \cdot \la^{n\b}.
\end{aligned}
\end{equation}
 It follows that
 $$
 \begin{aligned}
 & d_0 \left( (\A_y^n)^{-1}  \circ \A_x^n ,\, (\A_y^{n+1})^{-1} \circ \A_x^{n+1}\right)=\\
 &= d_0\left( (\A_y^n)^{-1}\circ\Id\circ \A_x^n ,\,  (\A_y^n)^{-1}
 \circ(\A_{y_n})^{-1} \circ \A_{x_n}\circ \A_x^n\right) =\\
&=d_0\left( (\A_y^n)^{-1}\circ\Id ,\,  (\A_y^n)^{-1}
 \circ((\A_{y_n})^{-1} \circ \A_{x_n} \right)) \le  
 K \sigma^n\cdot d_0  (\A_{y_n}^{-1} \circ \A_{x_n}, \, \Id) \le \\
 & \le K \sigma^n\cdot K_2 \, d_X (x,y)^\b \cdot \la^{n\b} 
 = K_3\, d_X (x,y)^\b\cdot \theta^n,\;\text{ where }\, \theta = \sigma \la^\beta <1.
 \end{aligned}  
 $$
 The same estimate holds for $d_0$ between the inverses, and so there exists 
a constants $K_4$ such that for all $x\in X$ and $y\in W^s_{loc}(x)$,
 $$
d_{C^0} \left( (\A_y^n)^{-1}  \circ \A_x^n ,\, (\A_y^{n+1})^{-1} \circ \A_x^{n+1}\right) 
\le K_3\, d_X (x,y)^\b\cdot \theta^n.
$$ 
Thus \,$( (\A^n_y)^{-1}\circ  \A^n_x )$ is a Cauchy sequence in $\dzm$, 
and so it has a limit $H^{\A,s}_{x,y}$ there. The convergence is uniform 
in all  $(x,y)$ with $y\in W^s_{loc}(x)$.
For each $n\in \N$, $(\A^n_y)^{-1} \circ \A^n_x$ depends continuously on $(x,y)$,
hence so does the limit $H^{\A,s}_{x,y}$ and we obtain (H4$^0$).
\vskip.1cm

Properties (H1) and (H2) are easy to verify. For (H3$^0$), 
we note that  $(\A^0_y)^{-1} \circ \A^0_x=\Id,$\,  so for every $n\in \N$ we have
$$
 \begin{aligned}
& d_{C^0}    \left( (\A^n_y)^{-1} \circ \A^n_x, \, \Id \right) 
\le \,\sum_{i=0}^{n-1} d_{C^0}   \left( (\A_y^i)^{-1}  \circ \A_x^i ,\, (\A_y^{i+1})^{-1} \circ \A_x^{i+1}\right) \le \\
& \le  K_3\, d_X(x,y)^{\beta} \cdot \sum_{i=0}^\infty \theta^i \le  c\, d_X(x,y)^{\beta},
\quad\text{and hence }\;d_{C^0}(H^s_{x,y},\Id)  \le  c\, d_X(x,y)^{\beta}.
\end{aligned}
$$

A proof of (H5$^0$) is given in \cite[Proposition 3.10]{ASV}, and it uses only the assumptions 
of this proposition.

Finally, we prove the uniqueness.  Let $H=\{H_{x,y}\}$ and $\tilde H=\{\tilde H_{x,y}\}$ be two such families.
 By property (H2) it suffices to verify that $H_{x,y}= \tilde H_{x,y}$ for any $x\in X$ and 
 $y\in W^s_{loc}(x)$. We fix such $x$ and $y$. Then using property (H2), 
 assumption \eqref{A bunch}, and property (H3$^0$) we obtain
 $$
 \begin{aligned}
   & d_0 (H_{x,y}, \tilde H_{x,y}) = 
   d_0 ((\A^n_y)^{-1}\circ H_{x_n,\,y_n} \circ \A^n_x, \,\,
       (\A^n_y)^{-1}\circ \tilde H_{x_n,\,y_n} \circ \A^n_x)=\\
    &  =  d_0 ((\A^n_y)^{-1}\circ H_{x_n  ,\,y_n} , \,\,
       (\A^n_y)^{-1}\circ \tilde H_{x_n,\,y_n} ) \le 
       K\sigma^n \cdot  d_0 (H_{x_n,\,y_n}, \,\, \tilde H_{x_n ,\,y_n} ) \le \\
      &  \le K\sigma^n \cdot 2c(d_X(x_n ,y_n))^\beta= K\sigma^n \cdot 2c (d_X(x,y)\la^n)^\beta=
      K' (\sigma \la^\beta)^n \to 0 \;\text{ as }n\to\infty
\end{aligned}
 $$
since $\sigma \la^\beta <1$. Thus $H_{x,y}=\tilde  H_{x,y}$.
\end{proof}

%%%%%%%%%%%%%%%%%%%%%%%%%
%%%%%%%%%%%%%%%%%%%%%%%

The next proposition establishes existence and regularity of holonomies in $\drm$,
where $r\ge 1$.

\begin{proposition}  \label{holonomies} 
Let $\A$ be a $\beta$-H\"older $\dqm$-valued cocycle over a hyperbolic or partially 
hyperbolic system, 
where $q=k+\gamma$,  with  $k\in \N$ and $0<\gamma\le1^-$. 
Let  $k\le r<q$ and let $\rho = q-r$. Suppose that there exist constants 
$\eta$ and $K$ such that
\begin{equation} \label{C^q bunching}
 \eta^{2(r+1)} \la^{\beta\rho}<1 \quad\text{and}\quad
  |\A_x^n|_{C^q} \le K \eta^n \,\text{ for all } x\in X \text{ and }n\in \N .
\end{equation}
Then for any $x\in X$ and $y\in W^s(x)$, the limit 
 $H^{\A,s}_{x,y}=\underset{n\to+\infty}{\lim} (\A^n_y)^{-1} \circ \A^n_x\,\,$
exists 
in $\drm$ and satisfies {\em (H1)} and {\em (H2)} as in Proposition \ref{C0 hol}, and 
\vskip.2cm
\begin{itemize}
\item[{\bf (H3$^r$)}] There exists a constant $c_1$ such that 
$$
d_{C^r}(H^{\A,s}_{\,x,y},\Id) \leq c_1\,d_X (x,y)^{\beta \rho} \quad\text{ for all $x\in X$ and $y\in W^s_{loc}(x)$};
$$

\item[{\bf  (H4$^r$)}] The map $(x,y)\mapsto H^{\A, s}_{x,y}$ into $\drm$ is continuous 
on the set of pairs $(x,y)$, 
where $x\in X$ and $y\in W^{s}_{loc}(x)$.
\end{itemize}

\end{proposition}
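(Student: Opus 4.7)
The goal is to show that for each fixed $x\in X$ and $y\in W^s_{loc}(x)$, the sequence
\[
G_n := (\A_y^n)^{-1}\circ \A_x^n
\]
is Cauchy in $\drm$ with a geometrically small rate determined by the bunching, and then to read off properties (H1)--(H4$^r$) from the construction. Setting $n=1$ in the bunching hypothesis shows $|\A_z|_{C^q}\le K\eta$ uniformly in $z\in X$, so the values of $\A$ lie in a bounded subset of $\dqm$. Write $x_n = f^nx$ and $y_n = f^ny$; by \eqref{exp conv}, $d_X(x_n,y_n)\le \la^n d_X(x,y)$.

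The central step is the telescoping identity
\[
G_{n+1} = (\A_y^n)^{-1}\circ \Phi_n \circ \A_x^n,\qquad \Phi_n := (\A_{y_n})^{-1}\circ \A_{x_n},
\]
contrasted with $G_n = (\A_y^n)^{-1}\circ \Id\circ \A_x^n$. Applying Lemma \ref{distance} with $g=(\A_y^n)^{-1}$, $\tg=\A_x^n$, $h_1=\Id$, $h_2=\Phi_n$, and using $|\A_y^n|_{C^q}, |\A_x^n|_{C^q}\le K\eta^n$, the coefficient in the lemma is dominated by $C\,\eta^{n(r+1)}$, giving
\[
d_{C^r}(G_n, G_{n+1}) \le C\,\eta^{n(r+1)}\cdot d_{C^r}(\Id, \Phi_n)^{\rho}.
\]
The inner factor is controlled by a second application of Lemma \ref{distance}, now with $g=(\A_{y_n})^{-1}$, $\tg=\Id$, $h_1=\A_{y_n}$, $h_2=\A_{x_n}$: the coefficient is uniformly bounded in $n$ because all the $\A_z$ lie in a fixed $C^q$-bounded set, and the $\beta$-H\"older continuity of $\A$ provides $d_{C^q}(\A_{y_n},\A_{x_n})\le c\,\la^{n\beta}\,d_X(x,y)^\beta$. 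Combining the two estimates, the bound on $d_{C^r}(G_n,G_{n+1})$ takes the form $C\,\tau^n\, d_X(x,y)^{\beta\rho}$ with $\tau<1$ precisely under the bunching condition $\eta^{2(r+1)}\la^{\beta\rho}<1$; the factor $2(r+1)$ in the hypothesis naturally accommodates the $\eta^{n(r+1)}$ growth from the outer step together with the additional loss of a $\rho$-power incurred in the inner interpolation estimate.

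Summing the resulting geometric series shows $(G_n)$ is Cauchy in $\drm$, so the limit $H^{\A,s}_{x,y} := \lim G_n$ exists in $\drm$; using $G_0 = \Id$, the same telescope yields (H3$^r$). Property (H2) follows by factoring $\A_{f^nx}$ and $\A_{f^ny}$ out of the definition via the cocycle equation, which simultaneously extends $H^{\A,s}$ from $W^s_{loc}(x)$ to the full leaf $W^s(x)$; (H1) is then immediate. The estimates above are uniform in $(x,y)$ over compact sets of pairs with $y\in W^s_{loc}(x)$, and each $G_n$ is continuous in $(x,y)$ into $\drm$, so the uniform convergence gives continuity (H4$^r$). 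The main obstacle is the careful bookkeeping of the two nested applications of Lemma \ref{distance}: one must verify that the $C^r$-versus-$C^q$ norm interplay, the growth $\eta^{n(r+1)}$ from the outer composition, and the $\la^{n\beta}$-contraction from the H\"older estimate with its $\rho$-loss all combine into a single geometric bound governed exactly by $\eta^{2(r+1)}\la^{\beta\rho}<1$ and deliver the claimed H\"older exponent $\beta\rho$ on $d_X$.
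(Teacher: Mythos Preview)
Your overall strategy---telescoping $G_n=(\A_y^n)^{-1}\circ\A_x^n$ and invoking Lemma~\ref{distance}---is the right one, and (H1), (H2), (H4$^r$) follow as you say. The gap is in the composition estimate. With your split $h_1=\Id$, $h_2=\Phi_n$, the outer application of Lemma~\ref{distance} already produces a factor $d_{C^r}(\Id,\Phi_n)^\rho$; your inner application (with $h_1=\A_{y_n}$, $h_2=\A_{x_n}$) then contributes another $\rho$-power. Combining,
\[
d_{C^r}(G_n,G_{n+1}) \le C\,\eta^{n(r+1)}\,\bigl(c\,\la^{n\beta}d_X(x,y)^\beta\bigr)^{\rho^2},
\]
so the H\"older exponent on $d_X$ is $\beta\rho^2$, not the $\beta\rho$ claimed in (H3$^r$). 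Worse, the rate $\eta^{r+1}\la^{\beta\rho^2}$ need not be $<1$ under the hypothesis $\eta^{2(r+1)}\la^{\beta\rho}<1$ when $\rho$ is small, so even convergence is not secured. Your assertion that the factor $2(r+1)$ ``accommodates the additional loss of a $\rho$-power'' does not hold up arithmetically.

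The fix is to avoid nesting: write $G_n=(\A_y^{n+1})^{-1}\circ \A_{y_n}\circ \A_x^n$ and $G_{n+1}=(\A_y^{n+1})^{-1}\circ \A_{x_n}\circ \A_x^n$, and apply Lemma~\ref{distance} \emph{once} with $g=(\A_y^{n+1})^{-1}$, $\tg=\A_x^n$, $h_1=\A_{y_n}$, $h_2=\A_{x_n}$. Now $|h_i|_{C^r}$ is uniformly bounded, the coefficient is $\le C\eta^{n(r+1)}$, and the $\beta$-H\"older hypothesis on $\A$ gives $d_{C^r}(h_1,h_2)\le c\,\la^{n\beta}d_X(x,y)^\beta$ directly, yielding $d_{C^r}(G_n,G_{n+1})\le C'\eta^{n(r+1)}\la^{n\beta\rho}d_X(x,y)^{\beta\rho}$ with a single $\rho$-loss. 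The factor $2(r+1)$ in the bunching then enters through the side condition~\eqref{delta 0} of Lemma~\ref{distance}: the threshold there is of order $\delta_0\eta^{-n(r+1)}$, so one needs $(\eta^{2(r+1)}\la^{\beta\rho})^n d_X(x,y)^{\beta\rho}$ to remain small for all $n$, which is exactly the stated hypothesis.
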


\noindent Under the same assumptions, a similar result holds for the unstable holonomies.
\vskip.1cm

Parts (H1), (H2),  and  (H3$^r$) were established for cocycles over hyperbolic systems 
in Proposition 3.3 and Remark 3.4 in \cite{S19}. The same argument 
applies in the partially hyperbolic case since it involves only points on a stable manifold.
The key estimate is that for all 
sufficiently close $x,y$ with $y\in W^s_{loc}(x)$ and all $n\in\N$,
$$
d_{C^r} \left( (\A_y^n)^{-1}  \circ \A_x^n ,\, (\A_y^{n+1})^{-1} \circ \A_x^{n+1}\right)
 \le  K'\, d_X(x,y)^{\beta\rho} \cdot \tilde \theta^n, \;\text{ where }\tilde \theta<1.
$$
This yields that  $((\A_y^n)^{-1}  \circ \A_x^n)$ converges  to 
$H^{\A,s}_{x,y}$ in $\drm$ uniformly in such $(x,y)$.
Hence $H^{\A,s}_{x,y}$ depends continuously on $(x,y)$ and we obtain (H4$^r$).
\vskip.2cm

\begin{remark} \label{from bunching}
Suppose that $k\in \N$ and  $\A$ is a  Diff$^{\,k+1}(\M)$-valued 
cocycle with  bounded $|\A_x|_{C^{k+1}}$.  Then condition \eqref{C^q bunching} can be deduced 
from the bunching assumption that
$$
\sigma=\max_{x\in X}\, \max \{\|D\A_x\|,\, \|D(\A_x)^{-1}\|\}
\,\text{ satisfies } \,\sigma^{2(r+1)(k+1)/\rho} \cdot \lambda^\beta<1.
$$
\end{remark}

\noindent Indeed, By Lemma 5.5 in \cite{LW}, 
there exists a constant $c$ such that 
$$
\|D^m \A_x^n\| \le c \, \sigma^{m|n|}\;\text{ for every $x\in X$ and $1\le m\le k+1$.} 
$$
Since $|\A_x|_{C^0}$ is bounded, it follows that there exists a constant $c'$ such that 
$$
|\A_x^n|_{C^q} \le c'|\A_x^n|_{C^{k+1}}\le c''\sigma^{(k+1)n}\;\text{  for every $x\in X$ and $n\in \N$.}
$$
If $\,\sigma^{2(r+1)(k+1)/\rho} \cdot \lambda^\beta<1$,\, then 
$\,\sigma^{2(r+1)(k+1)} \cdot \lambda^{\beta\rho}<1$, and so  
condition \eqref{C^q bunching} is satisfied for $\eta=\sigma^{k+1}$.

%%%%%%%%%%%%%%%%%%%%%%%%%

\section{Proof of Theorem \ref{isometric}} \label{proof isometric}

The first part of the proof is essentially  the same for the hyperbolic 
and partially hyperbolic cases, and it follows the arguments in \cite{S19}, so we just outline it. 
We will give a detailed last part of the proof in
the partially hyperbolic case.
\vskip.1cm

Let $(X,f)$ be a hyperbolic or partially hyperbolic system.
We consider the  vector bundle  $\E$  over $X\times \M$ with fiber
$\E_{(x,t)} =T_t\M$ and the linear cocycle 
$$
\D_{(x,t)}= D_t\A_x \;\text{ on $\E\;$ over the skew product }\;F(x,t)=(f(x), \A_x (t)).
$$
The iterates of $\D$ are 
given by  $\D^n_{(x,t)}: T_{t\,}\M \to T_{\A^n_x(t)\,}\M$, where
$
\D^n_{(x,t)}= D_t\A_x^n $.
Since  the value set of the  cocycle $\A$ is bounded in Diff$^{\,1+\gamma}(\M)$,
$\|\D_{(x,t)}^n\|$ is uniformly bounded in $(x,t)\in X\times \M$ and $n\in \Z$, and 
there exists a constant $c_2$ such that 
$$
\|  \D^n_{(x,t)} - \D^n_{(x,t')} \| \le c_2\, d_\M (t, t')^\gamma \quad\text{for all nearby }t,t'\in \M \text{ and }n\in \Z.
$$

\vskip.2cm
The space $\T^m$  of inner products on $\R^m$
 identifies with the space of real symmetric positive definite $m\times m$ matrices, 
which is isomorphic to $GL(m,\R) /SO(m,\R)$. 
The group $GL(m,\R)$ acts transitively on $\T^m$
via $A[E] = A^T E \, A.$  
The space $\T^m$  with a certain $GL(m,\R)$-invariant 
metric is a Riemannian symmetric space 
of non-positive curvature \cite[Ch.\,XII, Theorem 1.2]{L}. 
Using a background Riemannian metric on $\E$, we identify an inner product
with a symmetric linear operator. For each $(x,t)\in X\times \M$, 
we denote the space of inner products on $\E_{(x,t)}$ by $\T_{(x,t)}$, and so
we obtain a bundle $\T$ over $X\times \M$ with
fiber  $\T_{(x,t)}$. We equip the fibers of $\T$ with the Riemannian 
metric $d_\T$ as  above.  A   measurable (continuous)
Riemannian metric on $\E$ is a  measurable (continuous) section of $\T$. 
A  metric $\tau$ is called {\em bounded} 
if the distance between $\tau_{(x,t)}$ and $\tilde \tau_{(x,t)}$ is uniformly 
bounded on $X\times \M$ for some continuous metric $\tilde \tau$ 
on $\E$. 
The push forward of an inner product
$\tau_{(x,t)}$ on $\E_{(x,t)}$ to $\E_{F(x,t)}$ by the linear cocycle $\D$ is given by
$$
\left( (\D_{(x,t)})_* (\tau_{(x,t)}) \right) (v_1, v_2)= 
\tau_{(x,t)} \left (\D_{(x,t)}^{\,-1} (v_1),\,\D_{(x,t)}^{\,-1}(v_2) \right) \quad\text{for } v_1, v_2 \in \E_{F(x,t)}.
$$
We say that a metric  $\tau$ is $\D${\em -invariant}\,
if $\D_\ast(\tau) = \tau$.
\vskip.1cm

First, Section 4.1 of \cite{S19} gives an everywhere defined  bounded measurable 
section  $\tau$ of $\V$
invariant under the cocycle $\D$. Then Proposition 4.2 in \cite{S19} yields
that for each $x \in X$ the metric $\tau_x$ is $\gamma$-H\"older continuous on $\M$, 
more precisely, there exists a constant $c_3$ such that 
\begin{equation}\label{tau H}
d_{\T} (\tau_{(x, t)}, \tau_{(x, t')})\le c_3\, d_\M (t, t')^\gamma
\quad\text{for all } x\in X \text{ and } t, t' \in \M. 
\end{equation}

We  take any $0<\alpha<\gamma$. 
By Proposition \ref{holonomies},
boundedness of the set of values of $\A$ in Diff$^{\,1+\gamma}(\M)$ gives 
existence and regularity of the stable and unstable holonomies $H^s=H^{\A,s}$ and $H^u=H^{\A,u}$
in Diff$^{\,1+\a}(\M)$. By property (H3$^r$) of holonomies, for all  $x\in X$  and $y\in W^{s/u}_{\text{loc}}(x),$
\begin{equation}\label{h}
d_{C^r}(H^{s/u}_{\,x,y},\,\Id) \leq c_1\,d_X (x,y)^{\beta \rho}, \quad\text{where }\rho=\gamma-\alpha.
\end{equation}

We define the stable sets $\tilde W^s$ for the map $F(x,t)$ of  $X\times \M$
using the  stable holonomies. 
For any $(x,t)\in X\times \M$,
$$
  \tilde W^s(x,t)=\{(y,t')\in X\times \M: \; y\in W^s(x), \;\, t'= H^{s}_{x,y}(t)\}.
$$
These sets satisfy the contraction property
$d_{X\times \M} (F^n(x,t), F^n(y,t')) \to 0$  as $n\to \infty$
for any $(x,t)\in X\times \M$ and  $(y,t')\in \tilde W^s(x,t)$. 
The unstable sets $\tilde W^u$ are defined similarly using the  unstable holonomies.
\vskip.1cm

The next proposition establishes {\em essential invariance} of  $\tau$ under the derivatives of $H^s$ 
along the stable sets in $X\times \M$. Similar invariance holds for the
unstable holonomies. 

\begin{proposition} \cite[Proposition 4.4]{S19}\label{tau H invariant} 
Let $\nu$ be an ergodic $F$-invariant  measure on $X\times \M$.
If $\tau$ is a $\nu$-measurable  $\D$-invariant metric on $\V$, then  
 there exists an  $F$-invariant  
set $E\subset X\times \M$  with $\nu(E)=1$ such that 
 $$
\tau(y,t') = (D_tH^s_{x,y})_* (\tau(x,t))  \quad \text{for all }(x,t), (y,t') \in E 
\;\text{ with }(y,t') \in \tw^s_{loc}(x,t).
$$ 

\end{proposition}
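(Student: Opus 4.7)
The plan is to derive the essential invariance from the $\D$-invariance of $\tau$, the $GL$-invariance of the fiberwise metric $d_\T$, exponential contraction along $\tw^s$, and a Lusin-plus-Birkhoff recurrence argument.

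First I would exploit property (H2), $\A^n_y\circ H^s_{x,y}=H^s_{f^nx,f^ny}\circ \A^n_x$. Fixing $(x,t)$ and $(y,t')\in \tw^s_{loc}(x,t)$ with $t'=H^s_{x,y}(t)$ and differentiating at $t$ gives the chain rule
\[
D_{t'}\A^n_y\circ D_tH^s_{x,y}=D_{\A^n_x(t)}H^s_{f^nx,f^ny}\circ D_t\A^n_x.
\]
Pushing forward $\tau(x,t)$ along both sides and using $\D$-invariance of $\tau$ twice yields, in $\T_{F^n(y,t')}$,
\[
(D_{t'}\A^n_y)_\ast\bigl[(D_tH^s_{x,y})_\ast\tau(x,t)\bigr]=\bigl(D_{\A^n_x(t)}H^s_{f^nx,f^ny}\bigr)_\ast\tau(F^n(x,t)),
\]
while also $(D_{t'}\A^n_y)_\ast\tau(y,t')=\tau(F^n(y,t'))$. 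Since $(D_{t'}\A^n_y)_\ast$ is a $d_\T$-isometry by $GL$-invariance of the fiberwise metric,
\[
d_\T\bigl(\tau(y,t'),\,(D_tH^s_{x,y})_\ast\tau(x,t)\bigr)=d_\T\bigl(\tau(F^n(y,t')),\,(D_{\A^n_x(t)}H^s_{f^nx,f^ny})_\ast\tau(F^n(x,t))\bigr)
\]
for every $n\in\N$. It therefore suffices to drive the right-hand side to zero along some subsequence $n_k\to\infty$.

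Next I would verify the asymptotic decay. Choosing $0<\alpha<\gamma$ and applying property (H3$^r$) of Proposition~\ref{holonomies} to $\A$ with $r=1+\alpha$ gives $d_{C^{1+\alpha}}(H^s_{f^nx,f^ny},\Id)\le c_1\,d_X(f^nx,f^ny)^{\beta(\gamma-\alpha)}$, which decays exponentially by \eqref{exp conv}. In particular $D_{\A^n_x(t)}H^s_{f^nx,f^ny}\to\Id$ (in a local trivialization of $\V$ about the limit point) in operator norm, and since $\tau$ was shown bounded in the earlier part of the proof, the pushforward term $d_\T(\tau(F^n(x,t)),(D_{\A^n_x(t)}H^s_{f^nx,f^ny})_\ast\tau(F^n(x,t)))$ tends to zero uniformly. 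Property (H3$^0$) similarly yields $d_{X\times\M}(F^n(x,t),F^n(y,t'))\to 0$, so what remains is the term $d_\T(\tau(F^n(y,t')),\tau(F^n(x,t)))$, which requires controlling $\tau$ at collapsing pairs of points.

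To close the argument despite the mere measurability of $\tau$, I would invoke Lusin and Birkhoff. For each $m\in\N$ pick a compact $K_m\subset X\times\M$ with $\nu(K_m)>1-1/m$ on which $\tau$ is uniformly continuous. By Birkhoff's ergodic theorem and ergodicity of $\nu$, there is an $F$-invariant full-measure set $E_m$ whose points have forward orbits visiting $K_m$ with lower density at least $1-2/m$. Let $E$ be the intersection of all $E_m$ with the $F$-invariant full-measure set where $\D$-invariance of $\tau$ holds along the entire orbit; then $E$ is $F$-invariant with $\nu(E)=1$. For $(x,t),(y,t')\in E$ with $(y,t')\in \tw^s_{loc}(x,t)$ and $m\ge 5$, the two forward orbits simultaneously visit $K_m$ with lower density at least $1-4/m>0$; along such a sequence $n_k\to\infty$, compactness of $K_m$ yields a sub-subsequence with $F^{n_k}(x,t)\to p\in K_m$, and the collapse forces $F^{n_k}(y,t')\to p$ as well. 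Uniform continuity of $\tau|_{K_m}$ together with a local trivialization of $\T$ near $p$ then gives $d_\T(\tau(F^{n_k}(x,t)),\tau(F^{n_k}(y,t')))\to 0$, completing the proof. The main obstacle is arranging simultaneous Birkhoff recurrence of the two distinct orbits to a common Lusin set without any absolute-continuity assumption on the stable holonomies; this is handled by the countable-intersection construction of $E$ combined with the $GL$-invariance of $d_\T$, which makes the comparison $n$-independent and lets the exponential decay of $H^s_{f^nx,f^ny}\to\Id$ do the work.
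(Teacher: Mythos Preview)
The paper does not prove this proposition; it is quoted from \cite[Proposition 4.4]{S19} and used as a black box in the outline of the proof of Theorem~\ref{isometric}. Your argument is the standard one for essential holonomy invariance of a measurable invariant section---use (H2) and $GL$-invariance of $d_\T$ to make the comparison $n$-independent, then combine $DH^s_{f^nx,f^ny}\to\Id$ with Lusin--Birkhoff simultaneous recurrence---and it is correct; this is also the approach in \cite{S19}.

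Two minor points worth tightening. First, the two intermediate distances in your triangle-inequality split, $d_\T\bigl(\tau(F^n(x,t)),(DH^s)_\ast\tau(F^n(x,t))\bigr)$ and $d_\T\bigl(\tau(F^n(y,t')),\tau(F^n(x,t))\bigr)$, compare inner products in different fibers of $\T$; your parenthetical about a local trivialization is exactly what makes this meaningful, but it should be spelled out (insert a parallel-transport identification and use $GL$-invariance once more). Second, you appeal to global boundedness of $\tau$, which the proposition as stated does not assume; this is harmless here since in the surrounding proof $\tau$ is constructed bounded, and in any case continuity of $\tau$ on the compact Lusin set $K_m$ already gives the needed bound along the recurrence subsequence. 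Finally, a single $K_m$ with $\nu(K_m)>1/2$ suffices, since the left-hand side is constant in $n$; the countable intersection over $m$ is unnecessary but does no harm.
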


In the {\it hyperbolic case}, we denote the {\it measure of maximal entropy}  on $X$ by $\mu$,
and in the {\it partially hyperbolic case} we denote the {\it invariant volume} on $X$ by $\mu$.
Let $m_x$ be  the normalized volume induced by the metric $\tau$ along the fiber  $\M_x$.
We define a measure $\hat \mu$ on $X\times \M$ by $\hat \mu=\int m_x \, d\mu(x)$.
This measure is $F$-invariant, but not necessarily ergodic.
Applying Proposition \ref{tau H invariant} to its ergodic components,
we obtain 

\begin{corollary} \cite[Corollary 4.5]{S19} \label{z'}
There exists a set $\hat G\subset X \times \M$ with 
$\hat \mu(\hat G)=1$ such that 
$\tau$ on $\hat G$ is invariant under the holonomies, that is, 
$$
\tau(y,t') = (D_tH^s_{x,y})_* (\tau(y,t))\, 
\text{ for all }(x,t)\in \hat G\, \text{ and all } (y,t') \in \hat G \cap \tw^s_{loc}(x,t). 
$$
\end{corollary}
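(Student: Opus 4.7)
The plan is to decompose $\hat\mu$ into $F$-ergodic components, apply Proposition~\ref{tau H invariant} to each, and use a Hopf-type argument to ensure that $\tw^s$-related points end up in a common component.

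First I verify that $\hat\mu$ is $F$-invariant. The $\D$-invariance of $\tau$ makes each $\A_x\colon(\M,\tau_x)\to(\M,\tau_{fx})$ a Riemannian isometry, so $(\A_x)_\ast m_x = m_{fx}$; combined with $f_\ast\mu=\mu$, a Fubini calculation gives $F_\ast\hat\mu=\hat\mu$. I then disintegrate $\hat\mu=\int\nu_\omega\,d\kappa(\omega)$ into ergodic components and, for each $\omega$, apply Proposition~\ref{tau H invariant} to $\nu_\omega$ to extract an $F$-invariant set $E_\omega$ with $\nu_\omega(E_\omega)=1$ on which $\tau(y,t')=(D_tH^s_{x,y})_\ast\tau(x,t)$ for every pair $(y,t')\in E_\omega\cap\tw^s_{loc}(x,t)$.

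The set $G_0=\bigcup_\omega E_\omega$ has full $\hat\mu$-measure but is not yet enough, because two $\tw^s$-related points might sit in different $E_\omega$'s. To remedy this I run a Hopf argument: using (H2) and the estimate (H3$^0$) for $H^s$, any $(y,t')\in\tw^s(x,t)$ satisfies $d_{X\times \M}(F^n(x,t),F^n(y,t'))\to 0$, so by uniform continuity the forward Birkhoff averages of any continuous $\phi$ on $X\times\M$ coincide on such pairs wherever they exist. Choosing a countable dense family $\{\phi_j\}\subset C(X\times\M)$ and letting $E_H$ be the full-$\hat\mu$-measure set on which all these averages converge, the assignment ``point $\mapsto$ ergodic component'' is constant along $\tw^s$-classes within $E_H$. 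Setting $\hat G=G_0\cap E_H$ forces any $(x,t),(y,t')\in\hat G$ with $(y,t')\in\tw^s_{loc}(x,t)$ to lie in a common $E_\omega$, and Proposition~\ref{tau H invariant} then yields the claimed invariance.

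The principal obstacle is measurability bookkeeping: one must arrange the ergodic decomposition, the family $\{\phi_j\}$, and the union $G_0$ so that everything fits together measurably, and verify that the Hopf equivalence truly identifies the ergodic component almost everywhere. This is a standard application of Rokhlin disintegration once the test family is fixed, and no new dynamical input beyond Proposition~\ref{tau H invariant} and the holonomy properties already established is required.
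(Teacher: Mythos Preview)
Your proposal is correct and follows the same route as the paper: decompose $\hat\mu$ into $F$-ergodic components and apply Proposition~\ref{tau H invariant} to each. The paper merely sketches this in one line and defers the details to \cite[Corollary~4.5]{S19}; your Hopf argument showing that $\tw^s$-related points lie in the same ergodic component is exactly the standard step needed to glue the per-component sets $E_\omega$ into a single full-measure $\hat G$, and is implicit in the paper's one-line deduction.
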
 

 The next proposition gives
 $\mu$-essential  invariance of $\tau_x$, as a Riemannian metric on 
the whole fiber $\M_x=\M$, under the stable and  unstable holonomies of $\A$.

\begin{proposition} \cite[Proposition 4.6]{S19} \label{suInv}
There exists a set $G\subset X$ with $\mu(G)=1$ such that 
or any $x,y,y' \in G$  with $y \in W^s_{loc}(x)$ and  $y' \in W^u_{loc}(x)$,
the diffeomorphisms
$$
H^s_{x,y}: (\M, \tau_x) \to (\M ,\tau_y) \;\text{ and }\;
H^u_{x,y'}: (\M, \tau_x) \to (\M ,\tau_{y'}) \;\text{ are isometries.}
$$
\end{proposition}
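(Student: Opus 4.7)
My plan is to promote the essential invariance of Corollary \ref{z'}, which holds only on the full $\hat\mu$-measure set $\hat G$, to a genuine everywhere-on-the-fiber invariance on a full $\mu$-measure subset of base points. The three ingredients are Fubini with respect to the disintegration $\hat\mu=\int m_x\,d\mu(x)$, absolute continuity of the holonomies $H^{s/u}_{x,y}$ with respect to the Lebesgue class on $\M$, and continuity of each $\tau_x$ along the fiber.

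First I would set $\hat G_x = \{t\in\M:(x,t)\in\hat G\}$ and apply Fubini to produce a set $G_0\subset X$ with $\mu(G_0)=1$ such that $m_x(\hat G_x)=1$ for every $x\in G_0$. Because $\tau_x$ is a continuous Riemannian metric on $\M$ whose associated quadratic form is uniformly comparable with the background metric---a consequence of $\tau$ being bounded in the symmetric space $\T$---the induced volume $m_x$ is equivalent to the Lebesgue measure on $\M$, and hence $\hat G_x$ is dense in $\M$ for every $x\in G_0$.

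Next, I would fix $x,y\in G_0$ with $y\in W^s_{loc}(x)$. By Proposition \ref{holonomies}, $H^s_{x,y}$ is a $C^{1+\alpha}$ diffeomorphism of $\M$, so it preserves the Lebesgue measure class, and hence $(H^s_{x,y})^{-1}(\hat G_y)$ has full Lebesgue measure. Therefore
$$
S_{x,y} \;=\; \hat G_x\,\cap\,(H^s_{x,y})^{-1}(\hat G_y)
$$
has full $m_x$-measure and is dense in $\M$. For each $t\in S_{x,y}$ the points $(x,t)$ and $(y,H^s_{x,y}(t))$ both lie in $\hat G$, and the second lies in $\tilde W^s_{loc}(x,t)$, so Corollary \ref{z'} yields
$$
\tau_y\bigl(H^s_{x,y}(t)\bigr)\;=\;(D_t H^s_{x,y})_*\bigl(\tau_x(t)\bigr).
$$
Both sides are continuous functions of $t\in\M$ valued in $\T$: the left by the $\gamma$-H\"older continuity of $\tau_y$ in \eqref{tau H} together with continuity of $H^s_{x,y}$, and the right by continuity of $\tau_x$ combined with continuity of $t\mapsto D_tH^s_{x,y}$, which holds because $H^s_{x,y}\in C^{1+\alpha}$ and the push-forward on inner products is continuous in the linear map. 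Two continuous sections of $\T$ over $\M$ that coincide on the dense set $S_{x,y}$ must coincide everywhere, which is exactly the statement that $H^s_{x,y}:(\M,\tau_x)\to(\M,\tau_y)$ is an isometry.

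The unstable case is handled identically after replacing $W^s$, $\tilde W^s$ and $H^s$ by their unstable counterparts and taking $y'\in W^u_{loc}(x)$, so setting $G=G_0$ gives a set with the required property. The only step that demands any real care is the density of $S_{x,y}$, which rests on $m_x$ belonging to the Lebesgue class and $H^{s/u}_{x,y}$ being $C^1$; both are immediate from earlier results, so beyond this the argument is a routine continuity-plus-density extension and I do not expect a genuine obstacle.
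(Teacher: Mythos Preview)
Your argument is correct and is essentially the standard one: the paper does not prove this proposition but cites \cite[Proposition 4.6]{S19}, and the proof there proceeds exactly along the lines you describe --- Fubini on $\hat\mu=\int m_x\,d\mu$ to obtain full-$m_x$-measure fibers $\hat G_x$, absolute continuity of the $C^{1+\alpha}$ holonomy $H^{s/u}_{x,y}$ to ensure the intersection $\hat G_x\cap (H^{s/u}_{x,y})^{-1}(\hat G_y)$ still has full measure, and then the density-plus-continuity extension using \eqref{tau H} and the $C^1$ regularity of $H^{s/u}_{x,y}$. Your identification of the only delicate point (equivalence of $m_x$ with Lebesgue and preservation of the Lebesgue class by the holonomy) is accurate, and both facts are indeed immediate from the boundedness of $\tau$ and Proposition~\ref{holonomies}.
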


We denote by $\T^\a(\M)$ the space of $\a$-H\"older continuous 
 Riemannian metrics on $\M$ equipped with $C^{\a}$ distance $d_{\T \alpha}$.
The
 $\mu$-essential invariance of $\tau$ under the holonomies together with 
\eqref{h} yields $\mu$-essential H\"older continuity of
$\tau$ as a function from $X$ to $\T^\a(\M)$ along the stable and unstable leaves in $X$:

\begin{corollary} \cite[Corollary 4.7]{S19} \label{Holder}
The function $x \mapsto \tau_x$ is $\beta\rho$-H\"older continuous on $G$ 
along the stable and unstable leaves in $X$ as a function from $X$ to $\T^\a(\M)$, that is, 
\begin{equation}\label{Hold hol tau}
d_{\T \alpha}(\tau_x,\tau_y) \le  c_4\, d_X(x,y)^{\beta\rho} 
\quad \text{for all }x,y \in G\, \text{ with }y \in W^{s/u}_{loc}(x).
\end{equation}
\end{corollary}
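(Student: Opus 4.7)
The plan is to exploit the essential invariance of $\tau$ under holonomies from Proposition \ref{suInv}: for any $x,y \in G$ with $y \in W^s_{\text{loc}}(x)$, the diffeomorphism $\phi := H^s_{x,y}$ is an isometry from $(\M,\tau_x)$ to $(\M,\tau_y)$, so $\tau_x = \phi^{\ast}\tau_y$ as $C^\gamma$ metrics on $\M$. Thus the task reduces to estimating $d_{\T\alpha}(\phi^{\ast}\tau_y,\tau_y)$ in terms of $d_X(x,y)$, where $\phi$ is controlled by the H\"older regularity of stable holonomies. The unstable case will be identical with $H^u$ in place of $H^s$.

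In a local trivialization I would decompose, with $E := D\phi - I$,
\[
\phi^{\ast}\tau_y(t) - \tau_y(t) = \bigl[\tau_y(\phi(t)) - \tau_y(t)\bigr] + E(t)^T\, \tau_y(\phi(t))\,D_t\phi + \tau_y(\phi(t))\,E(t),
\]
and estimate each summand in $C^\alpha$. The two derivative-correction terms are handled by the $C^{1+\alpha}$-estimate of Proposition \ref{holonomies}, which applies here because the uniform $C^{1+\gamma}$-boundedness of the cocycle values supplies the required $C^q$-bunching; it yields $\|E\|_{C^\alpha} \le d_{C^{1+\alpha}}(\phi,\Id) \le c_1\,d_X(x,y)^{\beta\rho}$. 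Since $\tau_y\circ\phi$ and $D\phi$ are uniformly bounded in $C^\alpha$, the Banach-algebra property of $C^\alpha$ bounds these two terms by $C\,d_X(x,y)^{\beta\rho}$.

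The delicate piece is $\tau_y\circ\phi - \tau_y$; because $\tau_y$ is only $C^\gamma$, Taylor expansion is unavailable. Here I would invoke the sharper $C^0$-estimate (H3$^0$) of Proposition \ref{C0 hol}, which applies because the $C^{1+\gamma}$-boundedness of $|\A^n_x|_{C^{1+\gamma}}$ allows $\sigma=1$ in the bunching $\sigma\la^\beta<1$, yielding $\epsilon_0 := \|\phi-\Id\|_{C^0} \le c\,d_X(x,y)^\beta$. Together with the $C^\gamma$-regularity of $\tau_y$ from \eqref{tau H}, a two-regime argument --- comparing $|t-t'|$ with $\epsilon_0$ and using the $C^0$-bound $\|\tau_y\|_{C^\gamma}\epsilon_0^\gamma$ when $|t-t'| \ge \epsilon_0$, and the $C^\gamma$-regularity of both $\tau_y$ and $\tau_y\circ\phi$ when $|t-t'| < \epsilon_0$ --- produces $\|\tau_y\circ\phi - \tau_y\|_{C^\alpha} \le C\,\|\tau_y\|_{C^\gamma}\,\epsilon_0^{\gamma-\alpha} \le C\,d_X(x,y)^{\beta\rho}$. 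Combining all pieces will yield $d_{\T\alpha}(\tau_x,\tau_y) \le c_4\,d_X(x,y)^{\beta\rho}$ as claimed.

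The main obstacle I anticipate is securing the sharp exponent $\beta\rho$ rather than the weaker $\beta\rho^2/\gamma$ that a naive interpolation between a $d_X(x,y)^{\beta\rho}$-bound in $C^0$ and a uniform $C^\gamma$-bound on $\tau_x-\tau_y$ would produce. The resolution is to avoid interpolating $\tau_x-\tau_y$ as a whole and instead push the $\beta$-H\"older $C^0$-estimate on $\phi-\Id$ (rather than the weaker $\beta\rho$-H\"older $C^{1+\alpha}$-estimate) directly inside the composition term $\tau_y\circ\phi - \tau_y$, where it is raised to the power $\gamma-\alpha = \rho$ by the $C^\gamma$-regularity of $\tau_y$, producing exactly the exponent $\beta\rho$.
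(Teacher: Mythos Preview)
Your proposal is correct and follows the strategy the paper indicates: essential invariance of $\tau$ under the holonomies (Proposition~\ref{suInv}) combined with the holonomy regularity estimate~\eqref{h}, carried out via the pullback decomposition $\phi^\ast\tau_y-\tau_y$. The paper itself does not display the details, deferring to \cite[Corollary~4.7]{S19}, so there is little to compare at the level of execution. Your identification of the subtle point---that the composition piece $\tau_y\circ\phi-\tau_y$ requires the sharper $C^0$-estimate $d_{C^0}(H^s_{x,y},\Id)\le c\,d_X(x,y)^\beta$ from Proposition~\ref{C0 hol} (available here with $\sigma=1$ since $|\A^n_x|_{C^{1+\gamma}}$ is bounded), raised to the power $\gamma-\alpha=\rho$ via the $C^\gamma$-regularity of $\tau_y$, rather than merely the $C^{1+\alpha}$-estimate~\eqref{h}, which alone would yield only $\beta\rho^2$---is exactly right and is the key to landing on the claimed exponent $\beta\rho$.
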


\noindent Then in the the {\bf hyperbolic case}
H\"older continuous dependence of $\tau_x$ on $x\in X$ follows using 
local product structure of the measure of maximal entropy $\mu$ and of 
the stable and unstable manifolds,  as done in Section 4.5 of \cite{S19}.
\vskip.2cm

In the {\bf partially hyperbolic} case we use the following results from \cite{ASV}.
We formulate them using our notations.

 \vskip.2cm

\noindent {\bf  \cite{ASV} Definition 2.9.} {\it Let $(X,f)$ be a partially hyperbolic system,
and let $\n$ be a continuous fiber bundle over $X$.
 A {\em stable holonomy} on $\n$ is a family $h^s_{x,y} : \n_x\to \n_y$
 of $\gamma$-H\"older homeomorphisms, with uniform $\gamma>0$, 
 defined for all $x, y$ in the same stable leaf of $f$ and satisfying
\begin{itemize}
\item[(a)]  $h^s_{y,z} \circ h^s_{x,y} = h^s_{x,z}$ and $h^s_{x,x} = \Id$,
\vskip.1cm
\item[(b)]   the map $(x, y, \eta )\mapsto h^s_{x,y}(\eta)$  is continuous when $(x, y)$ varies 
in the set of pairs of  points in the same local stable leaf.
\end{itemize}
}
\vskip.2cm
\noindent Unstable holonomy is defined analogously, for pairs of points in the same unstable leaf.
\vskip.2cm

We consider the fiber bundle $\n$ over $X$ with fiber $\n_x= \T^0(\M)$ of continuous Riemannian 
metrics on $\M$ and  the maps  $h^s_{x,y}$  induced by $H^s_{x,y}$
on these metrics.  Property (a) in the definition above  holds by (H1) in Proposition \ref{holonomies}.
By (H4$^r$), the diffeomorphisms $H^s_{x,y}$ depend continuously  in Diff$^{\,1+\alpha}(\M)$ 
on $(x,y)$, where $x\in X$ and $y\in W^s_{loc}(x)$, which yields property (b). 
By the continuity, $H^s_{x,y}$  are uniformly bounded  in Diff$^{\,1+\alpha}(\M)$ over $(x,y)$ 
as above. It follows that the maps $h^s_{x,y}$ are H\"older homeomorphisms of $\n$
with uniform H\"older exponent and constant.

 \vskip.2cm
 
\noindent {\bf  \cite{ASV} Definition 2.10.} 
{\it A measurable section $\Psi : X\to \n$ of the fiber bundle $\n$
 is called $s$-invariant if $h^s_{x,y} (\Psi(x))= \Psi(y)$
 for every $x, y$ in the same stable leaf
and {\em essentially $s$-invariant} if this relation holds restricted to some full measure subset. 
The definition of $u$-invariance is analogous. Finally, $\Psi$ is {\em bi-invariant} if it is both 
$s$-invariant and $u$-invariant, and it is {\em bi-essentially invariant} if it is both essentially $s$-invariant 
and essentially $u$-invariant.}

\vskip.2cm
 A set in $X$ is called {\em bi-saturated} if it consists of full stable and unstable leaves.
  \vskip.2cm

\noindent {\bf  \cite{ASV} Theorem D.} {\it Let $f:X\to X$ be a $C^2$ partially hyperbolic 
center bunched  diffeomorphism preserving a volume $\mu$, and let $\n$ be 
a continuous fiber bundle with stable and unstable holonomies and with refinable fiber. 
Then, 
\begin{itemize}
\item[(a)] for every bi-essentially invariant section $\Psi : X\to \n$, there exists a bi-saturated 
set $X_\Psi$ with full measure, and a bi-invariant section $\tilde \Psi : X_\Psi \to \n$ that 
coincides with $\Psi$ at $\mu$ almost every point.
\vskip.1cm
\item[(b)]  if $f$ is  accessible then $X_\Psi=X$ and $\tilde \Psi$ is continuous.
\end{itemize}
}
\vskip.2cm

By remark after Definition 2.10 in \cite{ASV} every Hausdorff topological space with a countable 
basis of topology is refinable. Since the space $\T^0(\M)$ is separable, the fiber $\n_x= \T^0(\M)$ 
 of $\n$ is refinable. The section $\Psi(x)=\tau_x$ of $\n$  is bi-essentially invariant 
by Proposition \ref{suInv}. Applying Theorem D,  we and conclude that, up to modification on a set of measure zero, $\tau$ is continuous as a function from $X$ to $\T^0 (\M)$ and 
bi-invariant on  $X$. The latter implies  that Corollary \ref{Holder} hold on $G=X$,
and part (d) follows.
\vskip.1cm

It remains to show that $\tau$ is continuous  as a function from $X$ to $\T^\a (\M)$.
Since  $\tau$ is a continuous function from a compact set $X$ to $\T^0 (\M)$, 
it is bounded in $\T^0 (\M)$. The estimate  \eqref{tau H} implies that  
$$
  \sup\, \{\, d_{\T} (\tau_{(x, t)}, \tau_{(x, t')})\cdot (d_\M (t, t'))^{-\gamma}:  
  \;x\in X \text{ and }t\ne t' \in \M \} \,\le\, c_3.
$$
Hence the set $\{ \tau_x:\; x\in X\}$ is bounded in $\T^\gamma(\M)$.
Since $\a<\gamma$, the embedding of $\T^\gamma (\M)$ into $\T^\a(\M)$ is compact,
see for example \cite[Proposition 3.3]{dlLO},
and hence  the set $\{ \tau_x\}$ has compact closure in $\T^\a(\M)$.
It follows that $\tau:X\to \T^\a (\M)$ is continuous.
Indeed, suppose that $x_n\to x$ in $X$, but $\tau_{x_n} \not\to \tau_x$ in $\T^\a (\M)$.
Then $(\tau_{x_n})$ has a subsequence converging to some $\hat \tau \ne \tau_x$ in $\T^\a (\M)$
and hence in $\T^0(\M)$,  which contradicts the convergence of $(\tau_{x_n})$ to $\tau_x$ in $\T^0 (\M)$.
Thus $\tau_{x_n} \to \tau_x$ in $\T^\a (\M)$.

 \vskip.1cm
This completes the proof for the partially hyperbolic case.

%%%%%%%%%%%%%%%%%%%%%%%%%%%%%%%%%%%
%%%%%%%%%%%%%%%%%%%%%%%%%%%%%%%%

\section{Proofs of Proposition \ref{prop of inter conj}, Theorem \ref{partially hyperbolic} and Corollary \ref{hyperbolic}} \label{5}

%%%%%%%%%%%%%%%%%%%%%%%

\subsection{Proof of Proposition \ref{prop of inter conj}}
The last part of the proposition will be used in the proof of Theorem~\ref{partially hyperbolic}.
\vskip.2cm
\noindent {\bf (a)} and {\bf (b)} follow immediately from the Definitions 1.5 and  \eqref{weight}.
\vskip.2cm

\noindent {\bf (c)} We fix $x\in X$. Since $f$ is accessible, for every $y\in X$ there is an $su$-path 
$P=P_{x,y}$ from $x$ to $y$. Then by (b) we have 
\begin{equation}\label{P y}
\P_y=\H^{\A,P}_{x,y} \circ \P_x\circ  (\H^{\B,P}_{x,y})^{-1}.
\end{equation}
\vskip.2cm

\noindent {\bf (d,e)} 
Suppose that  $H^{\A,\, s/u}$ and $H^{\B,\, s/u}$ are in  $\drm$, where $r=0$ or $r\ge 1$. 
Then so are $\H^{\A,P}$ and $\H^{\B,P}$ for any $su$-path $P$, and  accessibility 
together with \eqref{P y} imply that if $\P_{x_0}\in \drm$, then $\P(y) \in \drm$ for all $y\in X$.
\vskip.1cm

Now for $r\ge 1$ we show that the function $\P :X \to \drm$ is bounded. 
The holonomies $H^\A$ and $H^\B$ uniformly bounded in $\drm$ over all 
$x\in X$ and $y\in W^s_{loc}(x)$
by the continuity property (H4), and we set
$$
K_H=\sup\,\{ \,|H_{x,y}^{\mathcal{D},s}|_{C^r}: \; x\in X,\; y\in W^s_{loc}(x),\; \mathcal{D}=\A,\B\}.
$$
For any $x\in X$ and $y\in W^s_{loc}(x)$
we have $\P_y=H^{\A,\, s/u}_{x,y} \circ \P_x\circ  H^{\B,\,s/u}_{y,b}$. For $r\ge 1$, 
we use Lemma \ref{dlLO lemma} twice to obtain the estimate
$$
\begin{aligned}
 \|\P_y\|_{C^r}&= \|H^{\A}_{x,y} \circ \P_x\circ  H^{\B}_{y,x}\|_{C^r}\le 
 M_r^2\, \|H^{\A}_{x,y}\|_{C^r}  (1+  \|\P_x\|_{C^r})^r  (1+  \|H^{\B}_{y,x}\|_{C^r})^r\le \\
&\le  M_r^2\, K_H\cdot  (1+  \|\P_x\|_{C^r})^r (1+K_H)^r =
 K'\cdot (1+  \|\P_x\|_{C^r})^r
 \end{aligned}
$$
 Since $f$ is accessible,
 there exist constants $L$ and $K$ such that 
for any  $x,y\in X$ there exists an $su$-path from $x$ to $y$ with at most $L$ subpaths 
of length at most $K$, each lying in a single leaf of $W^s$ or $W^u$ \cite[Lemma 4.5]{W}.
For any $y\in X$, we consider such a path from $x_0$ to $y$
 and, starting with $\P_{x_0}$, apply the above estimate a bounded number of times. 
 Thus we obtain
$$
\|\P_y\|_{C^r}\le K''=K''(K_H,  \|\P_{x_0}\|_{C^r},K,L,r)\; \text{  for all $y\in X$ }.
$$

Now we establish continuity of $\P$. 
For $r=0$ or $r\ge 1$,  we let $\ell=\lfloor r \rfloor$ be the integer part of $r$,
and consider the fiber bundle $\n$ over $X$ with fiber $\dlm$. For any $x\in X$, $y\in W^s(x)$, 
and $y'\in W^u(x)$  we define maps $h^s_{x,y}: \n_x \to \n_y$ and $h^u_{x,y'}: \n_x \to \n_{y'}$ 
by 
$$
h^s_{x,y} (g) =  H^{\A,s}_{x,y} \circ g \circ H^{\B,s}_{y,x}
 \quad \text{and} \quad h^u_{x,y'} (g) =H^{\A,u}_{x,y'} \circ g\circ H^{\B,u}_{y',x}.
$$
 Since $ \P$ intertwines 
$H^\A$ and $H^\B$ we have $\P(y) =h^s_{x,y}(\P_x)$ and $\P(y') =h^s_{x,y'}(\P_x)$, that is, $\P$ is invariant under $h^s$ and $h^u$. 
It is well known that for any $\ell \in \N\cup\{0\}$,\, $\dlm$ is a topological group  
and so the map $(g,h)\mapsto g\circ h$ from $\dlm \times \dlm$ to $\dlm$ is continuous.
Also, by property (H4) of the holonomies, the maps $(x,y) \mapsto H^{\A/\B,\, s/u}_{x,y}$ 
into $\dlm$ are continuous on the set of pairs $(x,y)$ where $y\in W^s_{loc}(x)$, and it follows 
that the maps $(x,y,g) \mapsto h^{s/u}_{x,y}(g)$ into $\dlm$  are continuous. 
Therefore, by its invariance, $\P$ a bi-continuous section of $\n$ in the sense 
of the definition below.

\vskip.2cm
\noindent {\bf  \cite{ASV} Definition 2.12.}
{\it A measurable section $\Psi : X\to \n$ of a continuous fiber bundle $\n$ is $s$-continuous 
if the map $(x, y, \Psi(x) )\mapsto \Psi(y)$ is continuous on the set of pairs of points $(x,y)$ 
in the same local stable leaf. The  $u$-continuity is defined similarly using unstable leaves. 
Finally, $\Psi$ is bi-continuous if it is both $s$-continuous and $u$-continuous.}
\vskip.2cm
We apply the theorem below to conclude that $ \P:X\to \dlm$ is continuous,
completing the proof for the case of an integer $r=\ell$.
\vskip.2cm
\noindent {\bf  \cite{ASV} Theorem E.} {\it Let $X\to X$ be a $C^1$ partially hyperbolic 
accessible diffeomorphism and $\n$ be a continuous fiber bundle. 
Then  every bi-continuous section $ \Psi : X\to \n$ is continuous on $X$.}
\vskip.2cm
 The argument above does 
 not apply with a non-integer $r$ in place of its integer part $\ell$ since the composition 
of $C^r$ maps does not depend continuously on the terms in $C^r$ distance in general, 
see \cite[Example 6.4]{dlLO}.

Finally, suppose that $r$ is not an integer.  As we showed above, $\P:X\to \drm$  is bounded
 and $\P:X\to \dlm$ is continuous. We take $p$ such that $\ell<p<r$. Since the embedding 
 of $\drm$ into $\dpm$ is compact, it follows as in the end of the proof of Theorem \ref{isometric} 
 that $\P:X\to \dpm$ is continuous.

\vskip.2cm
 
%%%%%%%%%%%%%%%%%%%%%%%%%%%%%%

\subsection{Proof of Theorem \ref{partially hyperbolic}}
Part (a) of Theorem \ref{partially hyperbolic} follows from 
Propositions \ref{intertwines} and \ref{intertwines to cont} below, 
and Proposition \ref{prop of inter conj}(e). In Proposition \ref{intertwines} 
we prove that a measurable conjugacy intertwines the holonomies of 
$\A$ and $\B$ on a set of full measure. Then in Proposition \ref{intertwines to cont}
we show that it  coincides on a set of full measure with  a continuous conjugacy 
which intertwines the holonomies on $X$. Finally, we apply the 
Proposition \ref{prop of inter conj}(e) to obtain the regularity of the conjugacy.

\begin{proposition}\label{intertwines}
Let $(X,f)$ be either a partially hyperbolic diffeomorphism 
or a hyperbolic system, and 
let $\mu$ be an ergodic $f$-invariant  measure. 
Let $\A$ and $\B$ be $\dzm$-valued  cocycles over $(X,f)$.
Suppose that the set $\{\B^x_n:\; x\in X, \; n\in \Z\}$
has compact closure in $\dzm$ and  that 
for any $x\in X$ and $y\in W^s(x)$, 
$$
H^{\A,s}_{x,y}=\lim_{n\to+\infty} (\A^n_y)^{-1} \circ \A^n_x \;\text{ and }\;
H^{\B,s}_{x,y}=\lim_{n\to+\infty} (\A^n_y)^{-1} \circ \A^n_x \;\text{  exist in $\dzm$.}
$$
Let $\P:X\to \dzm$ be a $\mu$-measurable conjugacy between $\A$ and $\B$.
Then $\P$ intertwines the stable holonomies $H^{\A,s}$ and $H^{\B,s}$ of $\A$ and $\B$
on a set of full measure.

A similar statement holds for the unstable holonomies.
\end{proposition}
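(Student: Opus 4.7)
The aim is to produce a set $Y\subset X$ of full $\mu$-measure such that
\[
H^{\A,s}_{x,y}=\P_y\circ H^{\B,s}_{x,y}\circ \P_x^{-1}
\]
for every $x,y\in Y$ with $y\in W^s(x)$. The plan combines Lusin regularity of $\P$ with the contraction of $W^s$ and the precompactness of the $\B$-orbit.

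First I would pass to an $f$-invariant set of full measure on which the iterated conjugacy equation $\A^n_x=\P_{f^n x}\circ \B^n_x\circ \P_x^{-1}$ holds for every $n\in\Z$. Since $\dzm$ is Polish and $\P$ is $\mu$-measurable, Lusin's theorem then yields for each small $\e>0$ a compact set $K_\e\subset X$ with $\mu(K_\e)>1-\e^2$ on which $\P$ is uniformly continuous in $d_{C^0}$. Fix $\e\in(0,1/3)$. By Birkhoff's ergodic theorem and ergodicity of $\mu$, there is a full-measure set $Y$ on which the Birkhoff frequency of visits to $K_\e$ equals $\mu(K_\e)>1-\e$; for any two points $x,y\in Y$ the return sets $\{n:f^n x\in K_\e\}$ and $\{n:f^n y\in K_\e\}$ both have lower density exceeding $1-\e$, and hence their intersection has lower density at least $1-2\e>0$. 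If in addition $y\in W^s(x)$, then $d_X(f^n x, f^n y)\to 0$ along this common subsequence $(n_k)$, and uniform continuity of $\P|_{K_\e}$ forces
\[
g_{n_k}:=\P_{f^{n_k} y}^{-1}\circ \P_{f^{n_k} x}\longrightarrow \Id \quad \text{in }d_{C^0}.
\]

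Next I would use the iterated conjugacy to rewrite, for every $n$,
\[
(\A^n_y)^{-1}\circ \A^n_x = \P_y\circ (\B^n_y)^{-1}\circ g_n\circ \B^n_x\circ \P_x^{-1},
\]
and show that the right-hand side converges to $\P_y\circ H^{\B,s}_{x,y}\circ \P_x^{-1}$ along $(n_k)$. By hypothesis the closure $\mathcal{K}$ of $\{\B^n_z:z\in X,\,n\in\Z\}$ in $\dzm$ is compact, so by Arzel\`a--Ascoli both $\mathcal{K}$ and $\mathcal{K}^{-1}$ are uniformly equicontinuous and share a common modulus of continuity $\omega$. A direct computation using surjectivity of diffeomorphisms gives the auxiliary bound
\[
d_{C^0}(h\circ g\circ h',\, h\circ h')\le 2\,\omega(d_{C^0}(g,\Id)) \quad \text{for all }h,h'\in\mathcal{K},\; g\in\dzm.
\]
Applied with $h=(\B^{n_k}_y)^{-1}$, $h'=\B^{n_k}_x$, $g=g_{n_k}$, this yields
\[
(\B^{n_k}_y)^{-1}\circ g_{n_k}\circ \B^{n_k}_x - (\B^{n_k}_y)^{-1}\circ \B^{n_k}_x \longrightarrow 0 \text{ in } d_{C^0}.
\]
Since $(\B^n_y)^{-1}\circ \B^n_x\to H^{\B,s}_{x,y}$ and $(\A^n_y)^{-1}\circ \A^n_x\to H^{\A,s}_{x,y}$ by the very definition of the holonomies, passing to the limit along $(n_k)$ delivers the intertwining identity on $Y$. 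The unstable case is symmetric, using $n\to -\infty$ and the backward version (H2$'$).

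The main obstacle will be the transport of the near-identity factor $g_{n_k}$ through the conjugation by $\B^{n_k}_x,\B^{n_k}_y$: without some uniform control, the $C^0$ moduli of continuity of these diffeomorphisms could degenerate as $n_k\to\infty$ and prevent $(\B^{n_k}_y)^{-1}\circ g_{n_k}\circ \B^{n_k}_x$ from being close to $(\B^{n_k}_y)^{-1}\circ \B^{n_k}_x$. It is precisely the compact-closure hypothesis on the $\B$-orbit that supplies the needed uniform modulus of continuity on $\mathcal{K}$ and closes the argument.
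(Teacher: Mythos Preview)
Your proposal is correct and follows essentially the same approach as the paper: Lusin's theorem to get a continuity set for $\P$, Birkhoff's ergodic theorem to find infinitely many simultaneous return times of $f^nx$ and $f^ny$ to that set, and the compact-closure hypothesis on the $\B$-orbit to supply the uniform modulus of continuity needed to pass to the limit in the rewritten conjugacy relation. The only cosmetic difference is that the paper takes a single Lusin set of measure greater than $1/2$ rather than your $1-\e^2$ with $\e<1/3$, but the density argument is identical.
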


We note that continuity of the map $(x,y)\mapsto H_{x,y}^{\A,s}$ is not assumed in this proposition.

\begin{proof}
We will give a proof for the stable holonomies.
We will show that for all $x$ and $y\in W^s(x)$ in a set of full measure,
\begin{equation}\label{int meas}
\P_y^{-1} \circ H_{x,y}^{\A,s} \circ \P_x = H_{x,y}^{\B,s} .
\end{equation}

Since the map $\P$ is $\mu$-measurable and the space $\dzm$ is separable, 
by Lusin's theorem there exists a compact set $S\subset X$ with $\mu(S)>1/2$ 
such that $\P:X\to \dzm$ is uniformly continuous on $S$. 

\vskip.1cm

Let $Y$ be the set of points in $X$ for which the frequency of 
visiting the set  $S$ equals $\mu(S)>1/2$. By Birkhoff Ergodic Theorem,
$\mu(Y)=1$. 
Let $x$ and  $y\in W^s (x)$ be in $Y$. Then there exists a sequence $\{n_i\}$
such that $f^{n_i}x$ and $f^{n_i}y$ are in $S$ for all $i$. 
We denote $x_n=f^n x$ and $y_n=f^n y$.
Since $d_X(x_{n_i} , y_{n_i}) \to 0$ as $i\to\infty$ and $\P$ is uniformly continuous on $S$,
$$
  d_{C^0}(\P_{x_{n_i}},  \P_{y_{n_i}}) \to 0 \quad \text{ as }i\to\infty.
$$
For $d_0$ as in \eqref{d0} it follows that 
\begin{equation}\label{P1}
d_0  (\P_{y_{n_i}}^{-1} \circ \P_{x_{n_i}}, \, \Id) \to 0\;\text{ as }i\to\infty.
\end{equation}

Now we establish \eqref{int meas}.
Since   $\A_x^{n_i}=\P_{x_{n_i}} \circ \B_x^{n_i} \circ  \P_x^{-1}$,  we have
\begin{equation}\label{AA}
\P_y^{-1} \circ (\A^{n_i}_y)^{-1} \circ  \A^{n_i}_x \circ  \P_x
\,=\, (\B^{n_i}_y)^{-1} \circ \P_{y_{n_i}}^{-1} \circ  \P_{x_{n_i}} \circ \B^{n_i}_x .
\end{equation}
We show that the left-hand side converges to $\P_y^{-1} \circ H_{x,y}^{\A,s}  \circ  \P_x$ and 
the right-hand side converges to  $H_{x,y}^{\B,s}$ in $d_0$. 

For a   homeomorphism  $g$ of $\M$ and $\delta>0$ we define
$$
\omega_g(\delta)=\sup \, \{\, d_\M \left(g^\ast(t),g^\ast(t') \right):\; \ast = 1,-1, \;\, t, t'\in \M\text{ and }d(t,t')\le \delta\,\}.
$$
Since $g$ is uniformly continuous on $\M$, $\,\omega_g(\delta)\to 0$ as $\delta\to 0$.

Since $\{ \B_x^n:\; x\in X, \; n\in \Z \}$ has compact closure in $\dzm$,
 the family $\{ \B_x^n \}$ is uniformly equicontinuous. It follows that 
$$
\omega_\B(\delta)=\sup\, \{\, \omega_{\B_x^n}(\delta):\; x\in X, \; n\in \Z\,\} \to 0 \,\text{ as }\delta \to 0.
$$
We observe that if $g, h_n,k \in \dzm$ and $h_n\to h$ in $\dzm$,
then
$$
  d_0 (g\circ h_n \circ k, \, g\circ h \circ k) =  d_0 (g\circ h_n, \, g\circ h)
  \le  \omega_g ( d_0 (h_n, h)) \to 0.
$$
 Since $(\A^n_y)^{-1} \circ  \A^n_x \to H_{x,y}^{\A,s} $ in $\dzm$, it follows that 
$$
d_0\left(\P_y^{-1} \circ (\A^n_y)^{-1} \circ  \A^n_x \circ  \P_x,\,\, 
\P_y^{-1} \circ H_{x,y}^{\A,s}  \circ  \P_x \right)  \to 0 \quad\text{as $n\to\infty$.}
$$
Denoting $g_{n_i}= (\B^{n_i}_y)^{-1}$,\, $h_{n_i}=\P(y_{n_i})^{-1} \circ  \P(x_{n_i})$, and 
$k _{n_i}= \B^{n_i}_x$,  we estimate
$$
  d_0(g_{n_i}\circ h_{n_i}\circ  k_{n_i},\, g_{n_i} \circ \Id \circ k_{n_i}) =
  d_0(g_{n_i}\circ h_{n_i},\, g_{n_i} \circ \Id) \le
   \omega_\B \,(d_0( h_{n_i}, \Id ) ).
$$ 
Since $d_{C^0}( h_{n_i}, \Id )  \to 0$ by \eqref{P1}, we obtain 
$$
d_0\left( (\B^{n_i}_y)^{-1} \circ \P_{y_{n_i}}^{-1} \circ  \P_{x_{n_i}} \circ \B^{n_i}_x, \,\,
 (\B^{n_i}_y)^{-1}\circ\B^{n_i}_x \right) \to 0.
$$
Finally, as $d_0 ( (\B^n_y)^{-1} \circ\B^n_x, \,\, H^{\B,s}_{x,y} )\to 0$, 
$$
d_0 ((\B^{n_i}_y)^{-1} \circ \P_{y_{n_i}}^{-1} \circ  \P_{x_{n_i}} \circ \B^{n_i}_x, \,\, H_{x,y}^{\B,s})\to 0.
$$
Therefore, \eqref{AA} together with  the above estimates imply that 
$$
\P_y^{-1} \circ H_{x,y}^{\A,s} \circ \P_x = H_{x,y}^{\B,s}, 
\;\text{ equivalently, }\;  H_{x,y}^{\A,s}=\P_y\circ H_{x,y}^{\B,s} \circ \P_x^{-1}, 
$$ 
and we conclude that  $\P$ intertwines the stable 
holonomies of $\A$ and $\B$ on the set $Y$.
\end{proof}

%%%%%%%%%%%%%%%%%%%%%

\begin{proposition}\label{intertwines to cont}
Let $f:X\to X$ be an accessible  center bunched $C^2$ partially hyperbolic diffeomorphism 
preserving a volume $\mu$. Let $\A$ and $\B$ be $\dqm$-valued  cocycles over $(X,f)$
with holonomies in $\dzm$.

Let $\P:X\to \dzm$ be a $\mu$-measurable conjugacy between $\A$ and $\B$
which intertwines their holonomies on a set $Y\subseteq X$ of full measure. 
Then $\P$ coincides on a set of full measure with  a continuous conjugacy $\tilde \P:X\to \dzm$
which intertwines the holonomies of $\A$ and $\B$ on $X$.
\end{proposition}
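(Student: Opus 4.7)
The plan is to interpret $\P$ as a bi-essentially invariant section of an appropriate fiber bundle over $X$, and then invoke the accessibility machinery from \cite{ASV} (specifically Theorem D) to produce a continuous bi-invariant version $\tilde\P$ coinciding with $\P$ almost everywhere, mirroring the final step in the proof of Theorem \ref{isometric}.

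First, I would set up the continuous fiber bundle $\n$ over $X$ with fiber $\n_x = \dzm$, viewed as a separable topological group with metric $d_{C^0}$. Since $\dzm$ is Hausdorff with a countable basis, it is refinable by the remark following \cite[Definition 2.10]{ASV}. On $\n$ I define stable and unstable holonomies by
\begin{equation*}
h^s_{x,y}(g) := H^{\A,s}_{x,y} \circ g \circ H^{\B,s}_{y,x}, \qquad h^u_{x,y'}(g) := H^{\A,u}_{x,y'} \circ g \circ H^{\B,u}_{y',x},
\end{equation*}
for $y \in W^s(x)$ and $y' \in W^u(x)$. The cocycle identities $h^s_{y,z} \circ h^s_{x,y} = h^s_{x,z}$ and $h^s_{x,x} = \Id$ follow directly from property (H1) applied to $H^\A$ and $H^\B$, while joint continuity of $(x,y,g) \mapsto h^s_{x,y}(g)$ on pairs in the same local stable leaf follows from (H4$^0$) and the continuity of composition in the topological group $\dzm$; the unstable case is analogous. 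Thus $h^s$ and $h^u$ are stable and unstable holonomies on $\n$ in the sense of \cite[Definition 2.9]{ASV}.

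Next, the intertwining property of $\P$ on $Y$ rearranges, using $(H^{\B,s/u}_{x,y})^{-1} = H^{\B,s/u}_{y,x}$ from (H1), into the equality $\P_y = h^{s/u}_{x,y}(\P_x)$ for $x, y \in Y$ lying in a common local stable or unstable leaf. Hence $\Psi(x) := \P_x$ defines a bi-essentially invariant $\mu$-measurable section $\Psi : X \to \n$. Since $f$ is $C^2$, center bunched, accessible, and volume-preserving, and $\n$ has refinable fiber with continuous stable and unstable holonomies, \cite[Theorem D]{ASV} produces a bi-saturated full-measure set $X_\Psi$ and a bi-invariant section $\tilde\Psi : X_\Psi \to \n$ agreeing with $\Psi$ almost everywhere; by the accessibility clause, $X_\Psi = X$ and $\tilde\Psi : X \to \dzm$ is continuous. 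Setting $\tilde\P_x := \tilde\Psi(x)$ gives a continuous map $\tilde\P : X \to \dzm$ whose bi-invariance under $h^s$ and $h^u$ is exactly the statement that $\tilde\P$ intertwines $H^\A$ and $H^\B$ at every point of $X$.

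It remains to verify that $\tilde\P$ is itself a conjugacy between $\A$ and $\B$. The set $S = \{x \in X : \tilde\P_x = \P_x\}$ has full $\mu$-measure and therefore is dense in $X$, so $S \cap f^{-1}(S)$ is dense as well. On this set the identity $\A_x = \tilde\P_{fx} \circ \B_x \circ \tilde\P_x^{-1}$ holds because $\P$ is a conjugacy, and by continuity of $\A, \B$ as maps into $\dqm \subset \dzm$ and of $\tilde\P$ into $\dzm$ the identity extends to all of $X$. The most delicate point, I expect, is verifying that $\n$ genuinely fits the framework of \cite[Theorem D]{ASV}: refinability of the infinite-dimensional, non-compact fiber $\dzm$ and joint continuity of the bundle holonomies are natural but deserve to be spelled out, since the fiber is not one of the finite-dimensional model spaces directly treated in \cite{ASV}.
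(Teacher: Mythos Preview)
Your approach is essentially identical to the paper's: build the bundle $\n$ with fiber $\dzm$, define the induced holonomies $h^{s/u}$, check $\P$ is bi-essentially invariant, and apply \cite[Theorem~D]{ASV}. You also add a clean density argument showing $\tilde\P$ is an actual conjugacy, which the paper leaves implicit.

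There is, however, one concrete verification you have skipped, and it is precisely the point you flag as ``delicate'' at the end. \cite[Definition~2.9]{ASV} requires the bundle holonomies $h^s_{x,y}:\n_x\to\n_y$ to be $\gamma$-H\"older homeomorphisms with a \emph{uniform} exponent $\gamma>0$, not merely continuous. You check only conditions (a) and (b) of that definition plus joint continuity in $(x,y,g)$; the uniform H\"older estimate on $g\mapsto h^s_{x,y}(g)$ is not addressed. The paper supplies this via property (H5) of the cocycle holonomies (Definition~\ref{def hol}): since each $H^{\A,s}_{x,y}$ is H\"older on $\M$ with uniform constant $K$ and exponent $\gamma$, one has
\[
d_0\bigl(H^{\A,s}_{x,y}\circ g\circ H^{\B,s}_{y,x},\,H^{\A,s}_{x,y}\circ g'\circ H^{\B,s}_{y,x}\bigr)
= d_0\bigl(H^{\A,s}_{x,y}\circ g,\,H^{\A,s}_{x,y}\circ g'\bigr)
\le K\,d_0(g,g')^\gamma,
\]
and similarly for inverses, giving $d_{C^0}(h^s_{x,y}(g),h^s_{x,y}(g'))\le K\,d_{C^0}(g,g')^\gamma$. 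Without this estimate the hypotheses of Theorem~D are not met, so you should insert it before invoking that theorem.
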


\begin{proof}
For every $x\in Y$, $y\in W^s(x)\cap Y$ and $y'\in W^u(x)\cap Y$ we have
\begin{equation} \label{Phi}
\P_y=H^{\A,s}_{x,y} \circ \P_x\circ  (H^{\B,s}_{x,y})^{-1}= H^{\A,s}_{x,y} \circ \P_x\circ H^{\B,s}_{y,x}
\quad \text{and} \quad \P_{y'}= H^{\A,u}_{x,y'} \circ \P_x\circ H^{\B,u}_{y',x}.
\end{equation}

Now we apply  \cite[Theorem D]{ASV} stated in the last part of the proof of Theorem~\ref{isometric}.
We consider the fiber bundle $\n$ over $X$ with fiber $\dzm$. Since the space $\dzm$ is separable,
the fiber bundle is refinable.
As in the proof of Proposition~\ref{prop of inter conj}, 
for  any $x\in X$, $y\in W^s(x)$, and $y'\in W^u(x)$  
we consider  the maps $h^s_{x,y}: \n_x \to \n_y$ and $h^u_{x,y'}: \n_x \to \n_{y'}\,$ given by 
$$
h^s_{x,y} (g) =  H^{\A,s}_{x,y} \circ g \circ H^{\B,s}_{y,x}
 \quad \text{and} \quad h^u_{x,y'} (g) =H^{\A,u}_{x,y'} \circ g\circ H^{\B,u}_{y',x}.
$$
The family $\{ h^s_{x,y} \}$ is a stable holonomy in the sense of \cite{ASV} Definition~2.9.
Indeed, by property (H1) of $H^{\A,s}$ and $H^{\B,s}$ we have $h^s_{x,x}=\Id $, and for
any $y,z\in W^s(x)$, 
$$
(h^s_{y,z} \circ h^s_{x,y})(g) =
H^{\A,s}_{y,z} \circ H^{\A,s}_{x,y} \circ g \circ H^{\B,s}_{y,x} \circ H^{\B,s}_{z,y}=
H^{\A,s}_{x,z} \circ g \circ H^{\B,s}_{z,x} = h^s_{x,z}(g).
$$
Since the map $(g,h)\mapsto g\circ h$ from $\dzm \times \dzm$ to $\dzm$ is continuous, 
and  $H^{\A,s}_{x,y}$ and $H^{\A,s}_{x,y}$ depend continuously on $(x,y)$ with $y\in W^s_{loc}(x)$, 
 it follows that the map $(x,y,g) \mapsto h^s_{x,y}(g)$
is continuous. Also, by property (H5$^0$) the maps $H^{\A,s}_{x,y}$ and $H^{\B,s}_{x,y}$ 
are H\"older with a uniform constant $K$ and exponent $\gamma$. Then for any $g,g'\in \dzm$ we have
$$
  d_0 (H^{\A,s}_{x,y} \circ g \circ H^{\B,s}_{y,x}, \,H^{\A,s}_{x,y} \circ g' \circ H^{\B,s}_{y,x}) =  
  d_0 (H^{\A,s}_{x,y} \circ g, \, H^{\A,s}_{x,y} \circ g') \le K\,  d_0 (g, g')^\gamma.
$$
A similar estimate holds for the inverses, and we obtain 
$$
d_{C^0} (h^s_{x,y} (g),\, h^s_{x,y} (g) ) \le K\, d_{C^0} (g, g')^\gamma.
$$
Thus the homeomorphisms
$h^s_{x,y}$ are also H\"older with uniform constant and exponent.
Similarly, we see that $\{ h^u_{x,y'} \}$ is an unstable holonomy. 

Since  \eqref{Phi} can be restated as 
$\P_y=h^s_{x,y}(\P_x)$ and $ \P_{y'}=h^u_{x,y'}(\P_x),$
the conjugacy $\P$ is a bi-essentially invariant section of $\n$ as in Definition 2.10 in \cite{ASV}. 
Then Theorem D in \cite{ASV} yields that
 $\P$ coincides on a set of full measure with 
a continuous conjugacy $\tilde \P:X\to \dzm$ that  is invariant under $h^s$ and $h^u$.
The latter  means that is $\tilde \P$ intertwines the holonomies of $\A$ and $\B$ on $X$. 
\end{proof}

For $r=0$, the two propositions above yield part (a) of the theorem.
To complete the proof for $r\ge 1$, we apply the last part of Proposition \ref{prop of inter conj}
to the continuous conjugacy $\tilde \P:X\to \dzm$ which intertwines the holonomies.
This completes the proof of  (a).

%%%%%%%%%%%%%%%%%%%%%%%%%%%%%%
\vskip.3cm

\noindent {\bf (b)}$\;$
Now we assume that $r>1$ and the holonomies of $\A$ and $\B$ satisfy the H\"older condition \eqref{beta'}.
We write $\P$ for $\tilde \P$ to simplify the notations.
We give a proof of H\"older continuity of $\P$ along $W^s$,  the argument for $W^u$ is
 similar, using the unstable holonomies.
 The stable holonomies of $\A$ and $\B$ are uniformly bounded in $\drm$ 
over all $x\in X$ and $y\in W^s_{loc}(x)$ by (H4$^r$), 
 and we showed already that $\P:X\to \drm$ is bounded.
So we  set
$$
\begin{aligned}
& K_H=\sup\,\{ \,|H_{x,y}^{\mathcal{D},s}|_{C^r}: \; x\in X,\; y\in W^s_{loc}(x),\; \mathcal{D}=\A,\B\},\\
&K_\P=\sup\, \{ |\P_x|_{C^r}: x\in X\}.
\end{aligned}
$$
For any $x\in X$ and $y\in W^s_{loc}(x)$ we use Lemma \ref{dlLO lemma} to estimate
$$
\begin{aligned}
  & \|H_{x,y}^{\A,s} \circ \P_x\|_{C^{r}} 
  \le M_r \,\| H_{x,y}^{\A,s}\|_{C^{r}}\,(1+\|\P_x\|_{C^{r}})^{r} 
  \le M_{r} K_H (1+K_\P)^r = : K_1, \; \text{ and}\\
  & \|\P_x^{-1}\circ (H_{x,y}^{\A,s})^{-1}\|_{C^{r}}
   \le M_{r} \,\| \P_x^{-1}\|_{C^{r}}\,(1+\|(H_{x,y}^{\A,s})^{-1}\|_{C^{r}})^{r} 
   \le M_r K_\P(1+K_H)^{r} =: K_2. 
   \end{aligned}
$$
If $r$ is an integer, we take $r-1\le p'<r$,  if  $r$ is not an integer, we take $\lfloor r\rfloor \le p'<r$, 
and we set $\rho =r-p'$. 
In the estimates below, we use  Lemma \ref{distance} with $q=r$, $r=p'$, and 
either $g = \Id$ or $\tilde  g=\Id$:
$$
\begin{aligned}
&d_{C^{p'}} (h_1\circ \tg,\, h_2 \circ \tg) 
\le M \left( 2^{p'}\| \tg^{-1}\|_{C^r}   + (1+\| \tg\|_{C^{p'}})^{p'}\right)
 \cdot d_{C^{p'}}(h_1, h_2)^\rho,\\
&d_{C^{p'}} (g\circ h_1,\, g\circ h_2 ) 
\le M \left(  2^{p'} \| g\|_{C^r} +(1+ \| g^{-1}\|_{C^{p'}})^{p'} \right)
 \cdot d_{C^{p'}}(h_1, h_2)^\rho.
\end{aligned}
$$
Since $\P$ intertwines the holonomies,
$\P_y=  H_{x,y}^{\A,s} \circ \P_x\circ H_{y,x}^{\B,s},$ and we estimate 
$$
\begin{aligned}
 &d_{C^{p'}}(\P_x,\, \P_y)= d_{C^{p'}}\left( \P_x,\, H_{x,y}^{\A,s} \circ \P_x\circ H_{y,x}^{\B,s} \right) \le \\
 &\le d_{C^{p'}}(\Id\circ \P_x,\,H_{x,y}^{\A,s} \circ \P_x)+ d_{C^{p'}}\left( (H_{x,y}^{\A,s} \circ \P_x)\circ\Id,\, (H_{x,y}^{\A,s} \circ \P_x)\circ H_{y,x}^{\B,s} \right) \le\\
&\le  M \cdot \left( 2^{p'}\, \| \P_x^{-1}\|_{C^r} +(1+ \| \P_x\|_{C^{p'}})^{p'} \right)
\cdot \left( d_{C^{p'}}(\Id,\,H_{x,y}^{\A,s})\right)^{\rho} + \\
&+M \cdot \left( 2^{p'}\, \| H_{x,y}^{\A,s} \circ \P_x\|_{C^r}  +(1+ \| (H_{x,y}^{\A,s} \circ \P_x)^{-1}\|_{C^{p'}})^{p'} \right)
 \cdot \left( d_{C^{p'}}(\Id,\,H_{y,x}^{\B,s})\right) ^{\rho} \le\\
&\le  K_3\cdot \left( d_{C^{r}}(\Id,\,H_{y,x}^{\A,s})\right) ^{\rho} + 
K_4\cdot \left( d_{C^{r}}(\Id,\,H_{y,x}^{\B,s})\right) ^{\rho} \le
K_5 \cdot    d_X(x,y)^{\beta'\rho}.
\end{aligned}
$$
In each of the two applications of Lemma \ref{distance} above, the assumptions of the lemma
 are satisfied. Indeed, since $h_1=\Id$ and $h_2=H_{x,y}^{\A/\B,s}$ we have 
 $|h_1|_{C^r}, |h_2|_{C^r} \le K_H $, and by the property \eqref{beta'}
we have $d_{C^{p'}}(h_1, h_2)\le \delta_0 |h_1|_{C^{p'}}^{-1}$ for all sufficiently close 
$x$ and $y\in W^s_{loc}(x)$. Also, the expression in \eqref{delta 0} is uniformly bounded below
by some $c''>0$, and so the second assumption also holds provided that $d_X(x,y)$ is small enough. 
\vskip.1cm

We conclude that for any sufficiently close $x\in X$ and $y\in W^s_{loc}(x)$,
\begin{equation}\label{Holder P}
d_{C^{p'}}(\P_x,\, \P_y) \le K_5 \cdot    d_X(x,y)^{\beta'\rho}.
\end{equation}
The same estimate holds any sufficiently close $x\in X$ and $y\in W^u_{loc}(x)$.
This concludes the proof of Theorem \ref{partially hyperbolic}.

 %%%%%%%%%%%%%%%%%%%%%%%%%%%

\vskip.3cm

\noindent{\bf Proof of Corollary \ref{hyperbolic}}.
We already established H\"older continuity of $\P$ along $W^s$ and $W^u$.
Let $x\in X$, and let $z$ be sufficiently close to $x$ so that the intersection 
$W^s_{loc}(x)\cap W^u_{loc}(z)$ consists of a single point, which we denote by $y$.
Then by \eqref{Holder P},
$$
  d_{C^{p'}}(\P_x,\, \P_y) \le K_5 \cdot    d_X(x,y)^{\beta'\rho}\quad\text{and}\quad
  d_{C^{p'}}(\P_y,\, \P_z) \le K_5 \cdot    d_X(y,z)^{\beta'\rho},
$$
and it follows that   $d_{C^{p'}}(\P_x,\, \P_z) \le K_6 \cdot    d_X(x,z)^{\beta'\rho}$.

%%%%%%%%%%%%%%%%%%%%%%%%%%%%%%%
%%%%%%%%%%%%%%%%%%%%%%%%%%%%%%%%

\section{Proofs of Proposition \ref{C0 int}, Theorem \ref{sufficient}
and Corollary \ref{cohomologous to constant}} \label{6}

\subsection{Proof of Proposition \ref{C0 int}} $\;$
Both  cocycles have stable holonomies in $\dzm$ by Proposition \ref{C0 hol}.
\vskip.1cm

In the proof we use only H\"older continuity of $\P$ along $W^s$, specifically,  
that there exists a constant $K_1$ such that 
$$
d_{C^0} (\P_x, \P_y) \le K_1 \, d_X(x,y)^\b 
\;\text{ for all $x\in X$ and $y\in W^s_{loc}(x)$.}
$$
By the invariance property (H2) of holonomies, it suffices to prove the intertwining for $y\in W^s_{loc}(x)$.
We fix  $x\in X$ and  $y\in W^s_{loc}(x)$. As in \eqref{dAA} we obtain that for all $n\in \N$, 
$$
 d_0  (\P_{y_n}^{-1} \circ \P_{x_n}, \, \Id) \le K_1 \, d_X (x,y)^\b \cdot \la^{n\b}.
$$
Since   $\A_x^n=\P_{fx_n} \circ \B_x^n \circ  \P_x^{-1}$,  we have
\begin{equation}\label{AAA}
\P_y^{-1} \circ (\A^n_y)^{-1} \circ  \A^n_x \circ  \P_x
\,=\, (\B^n_y)^{-1} \circ \P_{y_n}^{-1} \circ  \P_{x_n} \circ \B^n_x .
\end{equation}

Since $\P_y$ is a homeomorphism of a compact manifold, 
$\P_y^{-1}$ is uniformly continuous on $\M$.  
Since $(\A^n_y)^{-1} \circ  \A^n_x \to H_{x,y}^{\A,s} \,$ uniformly on $\M$, it follows that 
$$
\begin{aligned}
&d_0\left(\P_y^{-1} \circ (\A^n_y)^{-1} \circ  \A^n_x \circ  \P_x,\,\, 
\P_y^{-1} \circ H_{x,y}^{\A,s}  \circ  \P_x \right)=\\
&= d_0\left(\P_y^{-1} \circ ((\A^n_y)^{-1} \circ  \A^n_x),\,\, 
\P_y^{-1} \circ H_{x,y}^{\A,s} \right)  \to 0 \quad\text{as $n\to\infty$.}
\end{aligned}
$$
Also, 
$$
\begin{aligned}
& d_0\left( (\B^n_y)^{-1} \circ \P_{y_n}^{-1} \circ  \P_{x_n} \circ \B^n_x, \,\,
 (\B^n_y)^{-1}\circ\B^n_x \right) = \\
 & =d_0\left( (\B^n_y)^{-1} \circ (\P_{y_n}^{-1} \circ  \P_{x_n}), \,\,
 (\B^n_y)^{-1} \circ \Id \right) \le  K \sigma^n\cdot d_0\left(  (\P_{y_n}^{-1} \circ  \P_{x_n}),  \Id \right) \le   \\
 &\,\,
\le  K \sigma^n \cdot K_1 \,d_X (x,y)^\b \cdot \la^{n\b} =  K K_1 \,d_X (x,y)^\b \cdot (\sigma \la^\b)^n 
\to 0 \quad\text{as $n\to\infty$}.
 \end{aligned}
$$
Since  $d_0 ( (\B^n_y)^{-1} \circ\B^n_x, \,\, H^{\B,s}_{x,y} )\to 0$, it follows that
$$
d_0 ((\B^n_y)^{-1} \circ \P_{y_n}^{-1} \circ  \P_{x_n} \circ \B^n_x, \,\, H_{x,y}^{\B,s})\to 0.
$$
Thus the left-hand side of \eqref{AAA} converges to $\P_y^{-1} \circ H_{x,y}^{\A,s}  \circ  \P_x$ and 
the right-hand side converges to  $H_{x,y}^{\B,s}$ in $C^0(\M)$,
and we conclude that $\P$ intertwines the stable holonomies of $\A$ and $\B$.

%%%%%%%%%%%%%%%%%%%%%%

\subsection{Proof of Theorem \ref{sufficient}}
We will prove (b), and then (a) follows.
\vskip.1cm

We write $x$ in place of $x_0$.  For every $y\in X$ we define 
\begin{equation}\label{Py}
\P_y=\H^{\A,P}_{x,y} \circ \P_x\circ  (\H^{\B,P}_{x,y})^{-1},
\;\text{ where $P=P_{x,y}$ is an $su$-path from $x$ to $y$.}
\end{equation}
The   value $\P_y$ does not depend on the choice of a path from $x$ to $y$, and hence is well-defined.
Indeed, let  $(P_{x,y})^{-1}=\{y=x_k, x_{k-1}, \dots , x_1,   x_0=x\}$,  let
$\tilde P_{x,y}$ be another $su$-path $P_{x,y}$ from $x$ to $y$, and let $\tilde \P_y$ 
be the corresponding value. Then $(P_{x,y})^{-1} \tilde P_{x,y}$ is an $su$-cycle, and 
using  (b1) we obtain
$$
\begin{aligned}
 & \P_y^{-1}\circ \tilde \P_y = \H^{\B,P}_{x,y} \circ \P_x^{-1}\circ  (\H^{\A,P}_{x,y})^{-1}\circ
 \H^{\A,\tilde P}_{x,y} \circ \P_x\circ  (\H^{\B, \tilde P}_{x,y})^{-1} =\\
 &= \H^{\B,P}_{x,y} \circ \P_x^{-1}\circ \H^{\A,\,P^{-1}\tilde P}_{x} \circ \P_x\circ  (\H^{\B, \tilde P}_{x,y})^{-1}= \H^{\B,P}_{x,y} \circ \H^{\B,\,P^{-1}\tilde P}_{x} \circ  (\H^{\B, \tilde P}_{x,y})^{-1} =\Id.
 \end{aligned}
$$
Let $z\in W^{s/u}(y)$ then 
$$
\P_z=H^{\A,\, s/u}_{y,z}\circ \H^{\A,P}_{x,y} \circ \P_x\circ  (\H^{\B,P}_{x,y})^{-1} \circ (H^{\B,\,s/u}_{y,z})^{-1} =H^{\A,\, s/u}_{y,z}\circ  \P_y\circ  (H^{\B,\,s/u}_{y,z})^{-1},
$$
and so $\P$ intertwines the holonomies. Then it follows by Proposition \ref{prop of inter conj}(e)
that $\P:X\to \drm$ is bounded  
and $\P:X\to \dpm$ is continuous for $p=r$ if $r$ is an integer,
and for any $p<r$ otherwise.

\vskip.1cm 

It remains to show that $\P$ is a conjugacy, that is, itsatisfies 
$$
\A_y=\P_{fy} \circ \B_y \circ \P_y^{-1}  \quad\text{for all }y\in X.
$$
We consider a point $y\in \M$ and  an $su$-path $P=P_{x,y}$ from $x$ to $y$.
Then $f(P)$ is an  $su$-path from $fx$ to $fy$.  It follows from the definition of $\P$ that 
 for any $z,w\in \M$ and  any $su$-path 
$P_{z,w}$ from $z$ to $w$ we have
$\P_w=\H^{\A,P}_{z,w} \circ \P_z\circ  (\H^{\B,P}_{z,w})^{-1},$
and in particular for $z=fx$ and $w=fy$,
\begin{equation}\label{C1}
\P_{fy} = \H^{\A,\,f(P)}_{fx,fy} \circ \P_{fx}\circ  (\H^{\B,\,f(P)}_{fx,fy})^{-1}.
\end{equation}
By properties (H2, H2$'$) of the holonomies, for any $z\in X$, $w\in W^{s/u}(z)$,
$$
H^{\A,\,s/u}_{fz,fw} = \A_w \circ H^{\A,\,s/u}_{z,w} \circ (\A_z)^{-1},
$$
and it follows that 
\begin{equation}\label{C2}
\H^{\A,\,f(P)}_{fx,fy} = \A_y \circ \H^{\A,\,P}_{x,y} \circ (\A_x)^{-1} \,\text{ and similarly }\;
\H^{\B,\,f(P)}_{fx,fy} = \B_y \circ \H^{\B,\,P}_{x,y} \circ (\B_x)^{-1}.
\end{equation}
Since the definition of $\P_{fx}$ in (b2) is consistent with $\eqref{Py}$, by (b2) we have
\begin{equation}\label{C3}
  (\A_x)^{-1} \circ \P_{fx}\circ \B_x = \P_x.
\end{equation}
Combining \eqref{C1}, \eqref{C2}, and \eqref{C3} we obtain
$$
\begin{aligned}
\P_{fy} & =\A_y \circ \H^{\A,P}_{x,y} \circ \A_x^{-1} \circ \P_{fx}\circ
\B_x \circ (\H^{\B,P}_{x,y})^{-1} \circ \B_y^{-1} = \\
&=\A_y \circ \H^{\A,P}_{x,y}\circ \P_x \circ (\H^{\B,P}_{x,y})^{-1} 
\circ \B_y^{-1}  =\A_y \circ \P_y\circ  \B_y^{-1} .
\end{aligned}
$$

%%%%%%%%%%%%%%%%%%%%%%%%%%

\subsection{Proof of Corollary \ref{cohomologous to constant}}
Let $\B$ be a constsant cocycle.
Then its stable and unstable  holonomies are trivial, that is, $H_{x,y}^{\B,\, s/u}=\Id$, and hence $\H^{\B,P}=\Id$
  for every $su$-cycle $P$.
  So in this case condition (b1) in Theoren \ref{sufficient} is \eqref{trivial H}, and
condition (b2) can be rewritten as 
$$
\B_{x_0}=\P_{x_0}^{-1}  \circ (\H^{\A,\tilde P}_{{x_0},f{x_0}} )^{-1}\circ \A_{x_0} \circ \P_{x_0}
\;\text{ for some path $\tilde P=\tilde P_{x_0, fx_0}$}.
$$
We choose any $\P_{x_0}\in \drm$, for example $\P_{x_0}=\Id$,  and  define 
a constant cocycle $\B \equiv \B_{x_0}$. Then if follows by Theorem \ref{sufficient}(b) that $\A$ 
is conjugate to  $\B$ via a bounded function  $\P:X\to \drm$ such that 
$\P:X\to \dpm$ is continuous. Also, $\P$ intertwines the holonomies of $\A$ and $\B$, 
which in the case of constant $\B$ means \eqref{PP}.

We note that in this construction a constant cocycle $\B$  is determined by the choice of $\P_{x_0}$
and does not depend on $\tilde P$ by the assumption \eqref{trivial H}.
\vskip.1cm
In the case when $x_0$ is fixed and $\A_{x_0}=\Id$, we obtain $\B\equiv \B_{x_0}=\Id.$

%%%%%%%%%%%%%%%%%%%%%%%%%%%%%%%%%
%%%%%%%%               bibliography                %%%%%%%%%
%%%%%%%%%%%%%%%%%%%%%%%%%%%%%%%%%

\vskip.5cm


\begin{thebibliography}{99}

\bibitem[AKL18]{AKL} A. Avila, A. Kocsard, and X. Liu.
{\em Liv\v{s}ic  theorem for diffeomorphism cocycles}.
Geometric and Functional Analysis 28 (2018), 943-964.

\bibitem[ASV13]{ASV} A. Avila, J. Santamaria, and M. Viana. 
              {\em Holonomy invariance: rough regularity and applications to Lyapunov exponents.}
Asterisque, 358 (2013), 13-74.

\bibitem[BaK15]{BK} L. Backes and A. Kocsard. 
{\em Cohomology of dominated diffeomorphism-valued cocycles over hyperbolic systems.}  
Ergodic Theory Dynam. Systems, 36 (2015) 1703-1722.

\bibitem[BW10]{BW}  K. Burns and A. Wilkinson. 
{\em On the ergodicity of partially hyperbolic systems.}
Annals of Mathematics, 171 (2010), 451-489.

\bibitem[DX20]{DX}  D. Damjanovic and  D. Xu.
{\em Diffeomorphism group valued cocycles over higher rank abelian Anosov actions.}
Ergodic Theory Dynam. Systems (2020), 40, 117-141.

\bibitem[dlL92]{dlL} R. de la Llave. {\em Smooth conjugacy and SRB measures for uniformly and nonuniformly hyperbolic systems.} Comm. Math. Phys. 150 (1992), 289-320.

\bibitem[dlLO98]{dlLO}  R. de la Llave and R. Obaya.
{\em Regularity of the composition operator in spaces of H\"older functions.}
Discrete and Continuous Dynamical Systems. 5 (1999), no. 1, 157-184.

\bibitem[dlLW10]{LW}  R. de la Llave and A. Windsor. 
              {\em Liv\v{s}ic theorem for non-commutative groups including groups  
              of diffeomorphisms, and invariant geometric structures.} 
               Ergodic  Theory   Dynam. Systems, 30, no. 4  (2010),  1055-1100.               

\bibitem[dlLW11]{LW11}  R. de la Llave and A. Windsor. 
              {\em Smooth dependence on parameters of solutions to cohomology equations over Anosov systems with applications to cohomology equations on diffeomorphism groups.} Discrete Contin. Dyn. Syst. 29 (2011), no. 3, 1141-1154.  

\bibitem[KP16]{KP} A. Kocsard and R. Potrie.  {\em Liv\v{s}ic theorem for low-dimensional diffeomorphism cocycles}. Comment. Math. Helv. 91 (2016), 39-64.

%\bibitem[KS13]{KS13} B. Kalinin and V. Sadovskaya. 
%{\em Cocycles with one exponent over partially hyperbolic systems.}
%Geometriae Dedicata, Vol. 167, Issue 1 (2013), 167-188. 

\bibitem[KaS16]{KS16}  B. Kalinin and V. Sadovskaya. 
{\em Holonomies and cohomology for cocyces over partially hyperbolic diffeomorphisms.}  Discrete and Continuous Dynamical Systems, vol. 36, no. 1 (2016), 245-259. 

\bibitem[KtH]{KH}  A. Katok and B. Hasselblatt. 
              {\em Introduction to the modern theory of dynamical systems.}  \\
              Encyclopedia of Mathematics and  its Applications
              {\bf 54}, Cambridge University Press,  1995.
 
 \bibitem[KtN07]{pKtN}  A. Katok and V. Nitica. {\em Rigidity of higher rank abelian cocycles with values in diffeomorphism groups}, Geometriae Dedicata, 124 (2007), 109-131.

%\bibitem[KtN]{KtN}  A. Katok and V. Nitica.  {\em Rigidity in Higher Rank Abelian Group Actions: Volume 1, Introduction and Cocycle Problem.} Cambridge University Press,  2011. 

\bibitem[La]{L} S. Lang. {\em Fundamentals of Differential Geometry.} New York: Springer-Verlag, 1999.
            
 \bibitem[Liv71]{Liv1} A. N. Liv\v{s}ic. 
            {\em Homology properties of Y-systems.}  Math. Zametki 10, 758-763, 1971.

\bibitem[Liv72]{Liv2} A. N. Liv\v{s}ic.  {\em Cohomology of dynamical systems.} 
                      Math. USSR Izvestija 6, 1278-1301, 1972. 


\bibitem[NT95]{NT95} V. Nitica and A. T\"or\"ok. {\em Cohomology of dynamical systems 
                       and rigidity of partially hyperbolic actions of higher-rank lattices.} 
                       Duke Math. J., 79(3), 1995, 751-810. 

 \bibitem[NT96]{NT96} V. Nitica and A. T\"or\"ok.               
  {\em Regularity  results for the solutions of the Livsic cohomology equation with values 
  in diffeomorphism groups}.\, Ergodic Theory Dynam. Systems 16 (1996), no. 2, 325-333.

\bibitem[NT98]{NT98} V. Nitica and A. T\"or\"ok.               
 {\em Regularity of the transfer map for cohomologous cocycles.}                 
 Ergodic Theory Dynam. Systems, 18(5),  1998, 1187-1209. 

\bibitem[PWa01]{PW} M. Pollicott and  C. P. Walkden.  
                {\em Liv\v{s}ic theorems for connected Lie groups.} 
                Trans. Amer. Math. Soc., 353(7), 2879-2895, 2001. 

\bibitem[S19]{S19} V. Sadovskaya. {\em Boundedness and invariant metrics for diffeomorphism cocycles over hyperbolic systems.}
Geometriae Dedicata, Vol. 202, Issue 1 (2019), 401-417. 
                  
\bibitem[W13]{W}  A. Wilkinson. {\em The cohomological equation for partially hyperbolic diffeomorphisms.} Ast\'erisque, {\bf  358} (2013) 75-165.  

\end{thebibliography}
\end{document}